\date{}
\newtheorem{theorem}{\bf Theorem}[section]
\newtheorem{claim}[theorem]{\bf Claim}
\newtheorem{lemma}[theorem]{\bf Lemma}
\newtheorem{proposition}[theorem]{\bf Proposition}
\newtheorem{observation}[theorem]{\bf Observation}
\newcommand{\rt}{\right}
\newcommand{\lt}{\left}
\newcommand{\cG}{\mathcal{G}}
\let\eps=\varepsilon
\let\theta=\vartheta
\let\rho=\varrho
\title{\vspace{-1cm} The number of Hamiltonian decompositions of regular graphs.}
\author{
Roman Glebov
\thanks{School of Computer Science and Engineering, Hebrew University, Jerusalem 9190401, Israel and
Department of Mathematics, ETH, 8092 Zurich, Switzerland.
 {\tt roman.l.glebov@gmail.com}.
Research supported by the ERC grant ``High-dimensional combinatorics'' at the Hebrew University.}
\and
Zur Luria
\thanks{Institute of Theoretical Studies, ETH, 8092 Zurich, Switzerland. {\tt zluria@gmail.com}.
Research supported by Dr.~Max R\"ossler, the Walter Haefner Foundation and the ETH Foundation.}
\and
Benny Sudakov
\thanks{Department of Mathematics, ETH, 8092 Zurich, Switzerland. {\tt benjamin.sudakov@math.ethz.ch}. Research supported in part by SNSF grant 200021-149111.}
}
\begin{document}
\maketitle

\begin{abstract}
A Hamilton cycle in a graph $\Gamma$ is a cycle passing through every vertex of $\Gamma$.
A Hamiltonian decomposition of $\Gamma$ is a partition of its edge set into disjoint Hamilton cycles.
One of the oldest results in graph theory is Walecki's theorem from the 19th century,
showing that a complete graph $K_n$ on an odd number of vertices $n$ has a Hamiltonian decomposition. This result
was recently greatly extended by K\"{u}hn and Osthus. They proved that every
$r$-regular $n$-vertex graph $\Gamma$ with even degree $r=cn$ for some fixed $c>1/2$ has a Hamiltonian decomposition, provided $n=n(c)$ is sufficiently
large. In this paper we address the natural question of estimating $H(\Gamma)$, the number of such decompositions of $\Gamma$. Our main result is that $H(\Gamma)=r^{(1+o(1))nr/2}$. In particular, the number of Hamiltonian decompositions of $K_n$ is $n^{(1+o(1))n^2/2}$.
\end{abstract}

\maketitle

\section{Introduction}
\label{sec:intro}
A \emph{Hamilton cycle} in a graph $\Gamma$ is a cycle passing through each vertex of
$\Gamma$, and a graph is \emph{Hamiltonian} if it contains
a Hamilton cycle. Hamiltonicity, named after Sir Rowan Hamilton who studied it in the 1850s, is one of the most important
and extensively studied concepts in graph theory. It is well known that deciding Hamiltonicity is an NP-complete
problem, and thus one does not expect a simple sufficient condition for Hamiltonicity for general graphs.
Once Hamiltonicity is established, it is very natural to strengthen such a result by showing that the graph in question has many
edge-disjoint Hamilton cycles, or even has a \textit{Hamiltonian decomposition}, which is a partition of the edge set of the graph into disjoint Hamilton cycles. Clearly a Hamiltonian decomposition is only possible when $\Gamma$ is $r$-regular for some even $r$.
In 1890, Walecki gave a celebrated construction of a Hamiltonian decomposition of the complete graph $K_n$ for every odd $n$.
For a description of his construction, see, e.g.,~\cite{Al09}.

The work of Walecki was extended by various researchers, who proved that more general families of graphs admit a Hamiltonian decomposition, see, e.g., \cite{Rin, BFM89, KW, KuOs14,CLKOT15+} and a survey \cite{AlBe90}, which gives an overview of many results on this topic. One of the  classical results on Hamiltonicity of graphs is a theorem of Dirac which says that every $n$-vertex graph with minimum degree $n/2$ has a Hamilton cycle.
Recently K\"{u}hn and Osthus \cite{KuOs14} obtained a far reaching generalization of both Dirac's result for regular graphs and Walecki's decomposition. They proved that every
$r$-regular $n$-vertex graph $\Gamma$ with even degree $r=cn$ for some fixed $c>1/2$ has a Hamiltonian decomposition, provided $n=n(c)$ is sufficiently large.

The question of estimating the number of Hamilton cycles in various classes of graphs
has also been intensively studied, see, e.g.,~\cite{Ja94, F, CK, Kr12, GK13, FKS12} and the references therein.
However, to the best of our knowledge, so far only few results have been established on the number of Hamiltonian decompositions,
see, e.g.,~\cite{FoSkZa09} and the references therein.
In this paper we study the first natural question of this sort, providing counting versions of Walecki's and K\"{u}hn-Osthus' results. We are interested in $H(\Gamma)$, the number of Hamiltonian decompositions of a given $r$-regular $n$-vertex graph $\Gamma$ with even degree $r=cn$ for some fixed constant $c>1/2$.

To upper bound the number of Hamilton cycles in a regular graph, one can use
the standard upper bound for the permanent of a 0-1 matrix proved by Br\'{e}gman~\cite{Br73}
(solving the famous Minc conjecture).  This permanent-based approach to
Hamiltonicity problems was used for the first time by Alon~\cite{Alon} to bound the number of Hamilton paths in
tournaments (see also~\cite{FK, Kr12, GK13, KKO, FKS12} for additional applications).
Let $S_n$ be the set of all permutations of the set $[n]$. The \emph{permanent} of an $n\times n$ matrix $A$ is
defined as $per(A)=\sum_{\sigma\in S_n} \prod_{i=1}^n A_{i\sigma(i)}$.
Note that when $A=A_\Gamma$ is the 0-1 adjacency matrix of a graph $\Gamma$,
a summand in the permanent is $1$ if it corresponds to a spanning subgraph of $\Gamma$
whose connected components are either cycles or isolated edges, and the summand is $0$ otherwise.
Therefore, the number of Hamilton cycles in $\Gamma$ is bounded from above by $per(A_\Gamma)$. Combining this observation with
Br\'{e}gman's~\cite{Br73} upper bound, we see that an $r$-regular graph has at most
$(r!)^{n/r}$ Hamilton cycles. By choosing any one of these Hamilton cycles and  deleting it, we obtain an $(r-2)$-regular graph. The number of Hamilton cycles in the new graph can again be bounded from above using Br\'{e}gman's theorem.
Continuing this process and multiplying all the estimates for regularities $r, r-2, r-4, \ldots $
we can use Stirling's formula to deduce the following upper bound.

\begin{proposition}
\label{thm:upper_bound}
For every $r=r(n)\rightarrow \infty$, the number of Hamiltonian decompositions of an $r$-regular graph $\Gamma$ of order $n$ is at most
\[
\left((1+o(1))\frac{r}{e^2}\right)^{nr/2}.
\]
\end{proposition}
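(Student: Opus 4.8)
The plan is to carry out in detail the iterative argument sketched just before the statement: repeatedly peel off a Hamilton cycle, and at each step bound, via a permanent, how many choices there were for it.

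\emph{Step 1: the permanent bound for a single cycle.} We may assume $r$ is even, since otherwise $\Gamma$ has no Hamiltonian decomposition and $H(\Gamma)=0$. Orienting a Hamilton cycle of an $n$-vertex graph $\Delta$ yields a permutation $\sigma\in S_n$ with $(A_\Delta)_{i,\sigma(i)}=1$ for all $i$, so the number of Hamilton cycles of $\Delta$ is at most $per(A_\Delta)$. If $\Delta$ is $s$-regular then every row sum of $A_\Delta$ equals $s$, and Br\'{e}gman's theorem gives $per(A_\Delta)\le\prod_{i=1}^n(s!)^{1/s}=(s!)^{n/s}$.

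\emph{Step 2: multiplying over the decomposition.} Present a Hamiltonian decomposition as an ordered tuple $(C_1,\dots,C_{r/2})$ of Hamilton cycles; each unordered decomposition arises from exactly $(r/2)!$ such tuples. Once $C_1,\dots,C_j$ are fixed, $\Gamma\setminus(C_1\cup\dots\cup C_j)$ is $(r-2j)$-regular, so by Step 1 there are at most $((r-2j)!)^{n/(r-2j)}$ possibilities for $C_{j+1}$. Multiplying over $j=0,1,\dots,r/2-1$ and substituting $k=(r-2j)/2$,
\[
H(\Gamma)\ \le\ \frac{1}{(r/2)!}\prod_{k=1}^{r/2}\big((2k)!\big)^{n/(2k)}\ \le\ \prod_{k=1}^{r/2}\big((2k)!\big)^{n/(2k)}.
\]

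\emph{Step 3: Stirling's formula.} Take logarithms and use $\ln(m!)=m\ln m-m+O(\ln m)$, which turns $\frac{n}{2k}\ln\big((2k)!\big)$ into $n\ln(2k)-n+O\!\big(\tfrac{n\ln(2k)}{k}\big)$. Summing over $1\le k\le r/2$, with $\sum_{k\le r/2}\ln(2k)=\ln\!\big(2^{r/2}(r/2)!\big)=\tfrac r2\ln r-\tfrac r2+O(\ln r)$ (Stirling once more) and $\sum_{k\le r/2}\tfrac{\ln(2k)}{k}=O\big((\ln r)^2\big)$, I obtain
\[
\ln H(\Gamma)\ \le\ \tfrac{nr}{2}\ln r-\tfrac{nr}{2}-\tfrac{nr}{2}+O\!\big(n(\ln r)^2\big)\ =\ \tfrac{nr}{2}\ln\tfrac{r}{e^2}+O\!\big(n(\ln r)^2\big).
\]
Since $r\to\infty$, the error term equals $\tfrac{nr}{2}\cdot O\!\big(\tfrac{(\ln r)^2}{r}\big)=\tfrac{nr}{2}\cdot o(1)$, and exponentiating gives $H(\Gamma)\le\big((1+o(1))\tfrac{r}{e^2}\big)^{nr/2}$, as required.

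The argument has no real conceptual obstacle — it is simply Br\'{e}gman's inequality applied $r/2$ times. The only point that requires genuine care is the bookkeeping in Step 3: one must verify that all the accumulated lower-order contributions (an $O(\ln r)$ error from each use of Stirling, the $\log\big((r/2)!\big)$ coming from ordering the cycles, and the $\sum_{k}\tfrac{\ln(2k)}{k}$ term) together amount to only $O\big(n(\ln r)^2\big)$, which is $o(nr)$ once $r\to\infty$, so that the claimed $(1+o(1))$ factor absorbs all of them.
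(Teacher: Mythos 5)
Your proof is correct and follows exactly the route the paper sketches: Br\'egman's bound applied successively to the $(r-2j)$-regular remainders, followed by Stirling; the bookkeeping in Step 3 checks out and the error terms are indeed $o(nr)$.
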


Our main result is the corresponding lower bound, which together with the last proposition determines
asymptotically the number of Hamiltonian decompositions of dense regular graphs.

\begin{theorem}
\label{thm:counting_walecki}
Let $c>1/2$ be a constant and let $\Gamma$ be an $n$-vertex $r$-regular graph with even degree $r \geq cn$.
Then the number of Hamiltonian decompositions of $\Gamma$ satisfies
\[H(\Gamma)=r^{(1+o(1))rn/2} \,. \]
\end{theorem}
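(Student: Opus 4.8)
The upper bound is exactly \propref{thm:upper_bound} (note $(r/e^{2})^{nr/2}=r^{nr/2}e^{-nr}=r^{(1-o(1))nr/2}$, since $e^{-nr}=r^{-nr/\ln r}$), so the entire task is the matching lower bound $H(\Gamma)\ge r^{(1-o(1))rn/2}$. Writing $H(G)$ for the number of Hamiltonian decompositions of an arbitrary even-regular graph $G$, the plan is to build an \emph{ordered} Hamiltonian decomposition of $\Gamma$ greedily, deleting one Hamilton cycle at a time, and to bound from below the number of admissible choices at each step. The elementary identity behind this is that for any $d$-regular $G$, marking one of the $d/2$ cycles of a decomposition is a bijection, whence
\[
\tfrac d2\,H(G)=\sum_{C}H(G-C),
\]
the sum being over Hamilton cycles $C$ of $G$. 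I restrict the deletions to Hamilton cycles that keep the current graph inside a suitable ``quasirandom'' family $\cG$; iterating the identity from degree $r$ down to a small even degree $D_1$ then gives
\[
H(\Gamma)\ \ge\ \Bigl(\ \prod_{\substack{D_1<d\le r,\ d\text{ even}}}\frac{N_d}{d/2}\ \Bigr)\cdot\min_{\substack{G\in\cG,\\ D_1\text{-regular}}}H(G),
\]
where $N_d$ is a uniform lower bound, over $d$-regular $G\in\cG$, on the number of Hamilton cycles $C$ of $G$ with $G-C\in\cG$.

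The point is that extremely crude inputs already suffice. Since $\log\bigl(r^{rn/2}\bigr)=\Theta(n^{2}\log n)$, it is enough to have $N_d\ge(d/c_0)^{n}$ for some \emph{absolute} constant $c_0$: over the $\Theta(n)$ levels the total loss of $\le n\log c_0$ per level is only $O(n^{2})=o(n^{2}\log n)$. It is likewise enough to peel down to \emph{any} $D_1=o(n)$, because the omitted levels $d\le D_1$ carry only $\sum_{d\le D_1}n\log d=o(n^{2}\log n)$ of the exponent, and for the last factor one only needs the \emph{existence} of a Hamiltonian decomposition of the leftover, i.e.\ $\min H(G)\ge1$. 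With $N_d\ge(d/c_0)^n$ inserted, Stirling's formula turns the displayed product into $r^{(1-o(1))rn/2}$, matching \propref{thm:upper_bound}. (The tools invoked below in fact yield $N_d\ge(d/e)^n e^{-o(n)}$, comfortably more than required.)

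It remains to produce $\cG$ with: (i) $\Gamma\in\cG$; (ii) for $D_1<d\le r$, every $d$-regular $G\in\cG$ has $N_d\ge(d/c_0)^n$; (iii) every $D_1$-regular member of $\cG$ has a Hamiltonian decomposition. The \emph{dense} range $d\ge n/2$ is essentially automatic: a $d$-regular graph with $d\ge n/2$ is a Dirac graph, deleting a Hamilton cycle lowers every degree by exactly $2$ so the Dirac condition survives down to $d=n/2$, and—since a Hamilton cycle has spectral norm at most $2$—deletion also cannot destroy a robust-expansion property, so \emph{every} Hamilton cycle is admissible; the theorem of Cuckler and Kahn bounds the number of Hamilton cycles of a Dirac graph below by $(d/e)^{n}e^{-o(n)}$. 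The range from $d\approx n/2$ down to $D_1$ is the substantive part. Here one first uses that an $r$-regular graph with $r\ge cn$, $c>1/2$, is automatically a robust expander (every balanced cut has $\Omega(n^{2})$ edges, with constant depending on $c-\tfrac12$), which lets the process continue past $n/2$ once one has a \emph{counting} version of Hamiltonicity in robust expanders. Below linear degree, $\cG$ must be a pseudorandomness notion for sparse regular graphs (such as an $(n,d,\lambda)$-type condition); the deleted cycles must be chosen so as to preserve it, one invokes results of Krivelevich on the number of Hamilton cycles in pseudorandom graphs (already giving $(d/e)^n e^{-o(n)}$), and at the bottom level $d=D_1=o(n)$ one applies a Hamilton-decomposition theorem for sparse (pseudo)random regular graphs to supply the final factor $\ge1$.

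The heart of the difficulty is exactly this sub-Dirac regime. In the Dirac range any Hamilton cycle may be deleted, but here the choices are genuinely constrained: $\Gamma$ is only assumed regular, not quasirandom, so one cannot pass to a quasirandom graph at the outset, and an uncontrolled sequence of deletions can leave a disconnected or otherwise useless $D_1$-regular remainder. One therefore has to delete Hamilton cycles that are ``well spread'' relative to the relevant cuts and partitions—controlling the edge distribution in the spirit of the regularity/absorption machinery behind the K\"uhn--Osthus theorem—while continuously verifying that the quasirandomness parameters stay within the hypotheses of the counting and decomposition lemmas over all $\Theta(n)$ steps, and in particular that one can descend all the way to degree $o(n)$. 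Establishing this is the crux; granted it, the theorem follows from the product estimate above together with \propref{thm:upper_bound}.
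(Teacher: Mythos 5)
There is a genuine gap, and you have in fact flagged it yourself: you write that descending through the sub-Dirac regime while preserving enough structure to keep counting and to finish with a decomposition is ``the crux,'' and you do not supply a mechanism for it. That regime is not a technicality — for every $c$ the levels $\eps n\le d\le n/2$ carry a constant fraction of the exponent $rn\log r/2$, so nothing resembling the theorem follows from the Dirac-range argument plus Cuckler--Kahn. Moreover, the specific route you gesture at would not work as stated: $\Gamma$ is only assumed regular, so there is no $(n,d,\lambda)$-type condition to start from, and even if there were, each deleted Hamilton cycle can shift the relevant spectral parameter by up to $2$, so after the $\Theta(n)$ deletions needed to reach degree $o(n)$ the accumulated perturbation is $\Theta(n)$ and swamps the degree — the condition cannot be maintained down to $d=o(n)$ by that argument. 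Your reduction bookkeeping (only $(d/c_0)^n$ choices per level are needed, and the bottom levels can be discarded) is correct, but note it already suffices to stop at degree $\eps n$ for arbitrarily small fixed $\eps$, which removes any need for sparse counting or sparse decomposition theorems.

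The paper's solution to exactly this difficulty is different and simpler in two ways you are missing. First, it performs a random edge-partition $\Gamma=G\cup F\cup R$ \emph{at the outset}: $R$ is a linear-degree robust $(\nu,\tau)$-expander reserved untouched for the endgame (robust expansion is monotone under adding edges, so the leftover graph inherits it and Kühn--Osthus applies directly at degree $3\eps cn=\Theta(n)$ — no descent to $o(n)$), and $F$ is a sparse reservoir with many edges between every pair of sets of sizes $\delta^2 n$ and $(1/2-\delta)n$. Second, the per-level count does not come from counting Hamilton cycles in a pseudorandom graph: it comes from the Ferber--Krivelevich--Sudakov permanent bound on $(\le2)$-factors with at most $\sqrt{n\log n}$ cycles in the regular graph $G_i$, after which each such factor is converted into a Hamilton cycle by P\'osa rotations that use only $O(\sqrt{n\log n})$ edges of $F$, together with a small gadget ($E_G$, $E_F$) that repairs the degrees so that $G_{i+1}$ is again exactly regular and $F$ retains its edge-distribution property. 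The only hypothesis that must be propagated is the regularity of $G_i$ and the (slowly decaying) edge count of $F$ between large sets — no quasirandomness of the evolving graph is ever needed. Without these two devices (the reserved robust expander for the endgame, and the $F$-assisted rotation/repair step for the per-level count), your plan does not close.
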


\noindent
In particular the number of Hamiltonian decompositions of $K_n$ is $n^{(1+o(1))n^2/2}$.

\subsection{Outline of the Proof}
\label{sec:outline}
Let $\Gamma$ be an $n$-vertex $r$-regular graph with even degree $r \geq cn$ and $c>1/2$. Let $\delta$ be an arbitrary constant $\delta<\frac{1}{2}(c-1/2)$ and let $A$ and $B$ be two subsets of $V(\Gamma)$ such that $|A| \geq \delta^2 n$ and $|B| \geq (1/2-\delta)n$. Every vertex in $A$ has at least $cn-(n-|B|)>\delta n$ neighbors in $B$. Therefore the number of edges between $A$ and $B$ satisfies $e_\Gamma(A,B) \geq \delta n|A|/2\geq \delta^3n^2/2$. (If $A$ and $B$ have a nonempty intersection then $e_\Gamma(A,B)$ counts the edges inside $A \cap B$ with multiplicity one.) Therefore, our main result follows from the following more general statement (by taking $\gamma=\delta^3/2$ and $\eps \rightarrow 0$).

\begin{theorem}
\label{counting}
For every $c>0$ and $0<\eps<\frac{1}{10}$, there exists $\delta>0$ such that for any constant $\gamma>0$ and sufficiently large $n$ the following holds.
Let $\Gamma$ be an $n$-vertex $r$-regular graph with even degree $r \geq c n$ such that
there are at least $\gamma n^2$ edges between any two subsets $A,B \subseteq V(G)$ satisfying  $|A| \geq \delta^2 n$ and $|B| \geq (1/2-\delta)n$.
Then the number of Hamiltonian decompositions of $\Gamma$ is at least $r^{(1-5\eps)rn/2}$.
\end{theorem}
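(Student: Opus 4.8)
The plan is to build Hamiltonian decompositions by peeling off one Hamilton cycle at a time and tracking the number of choices at each step. Set $d_0:=r$ and remove edge-disjoint Hamilton cycles $C_1,C_2,\ldots$ from $\Gamma$, writing $\Gamma_i:=\Gamma\setminus(C_1\cup\cdots\cup C_i)$, which is $d_i$-regular with $d_i=r-2i$; stop after $t:=\lceil(1-3\eps)r/2\rceil$ steps, when $\Gamma_t$ has degree roughly $3\eps r$, and then invoke the theorem of K\"uhn and Osthus on Hamilton decompositions of regular robust expanders to decompose $\Gamma_t$ itself in at least one way. Counting the ordered tuples $(C_1,\ldots,C_t)$ produced this way, dividing by $t!\le r^{r}$ to pass to unordered collections, and appending the decomposition of $\Gamma_t$, the whole theorem reduces to the counting lemma: \emph{every $d$-regular robust $(\nu,\tau)$-outexpander on $n$ vertices with $d\ge\eps n$ has at least $d^{(1-o(1))n}$ Hamilton cycles}, for $\nu,\tau$ fixed positive constants. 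Indeed, applying this to each $\Gamma_i$ with $i<t$ gives at least $\prod_{i<t} d_i^{(1-o(1))n}$ ordered partial decompositions; since $\prod_{i<t} d_i=2^{\,t}\,(r/2)!/(3\eps r/2)!=r^{(1-3\eps+o(1))\,r/2}$ by Stirling, while $t!$ contributes only $r^{o(rn)}$, the bound $r^{(1-5\eps)rn/2}$ follows with room to spare.

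Two issues must be handled, and they interact. To apply the counting lemma to $\Gamma_i$ I need $\Gamma_i$ to remain a robust outexpander; the hypothesis makes $\Gamma$ itself a robust $(\nu,\tau)$-outexpander with $\tau=\delta^2$ and $\nu=\nu(\gamma,\delta)$ (if some $S$ with $\tau n\le|S|\le(1-\tau)n$ had robust neighbourhood smaller than $|S|+\nu n$, then $S$ together with the complement of that neighbourhood would violate the edge bound), but removing one Hamilton cycle can drop $|N(v)\cap S|$ by $2$, and over $t=\Theta(n)$ removals the crude loss $2t/n$ swamps $\nu$. I would fix this by removing each $C_{i+1}$ \emph{uniformly at random} among the Hamilton cycles of $\Gamma_i$: then $\Exp[\,|N_{\Gamma_{i+1}}(v)\cap S|\,]=|N_{\Gamma_i}(v)\cap S|\cdot d_{i+1}/d_i$, so $M_i:=(r/d_i)\,|N_{\Gamma_i}(v)\cap S|$ is a martingale with increments bounded by $O(\eps^{-2})$, and Azuma's inequality followed by a union bound over the $\le 2^n$ sets $S$ shows that with high probability $\Gamma_i$ stays a robust $(\tfrac52\eps\nu,\tau)$-outexpander for all $i\le t$ simultaneously; in particular $\Gamma_t$ is a robust outexpander of linear degree, so K\"uhn--Osthus applies. (The subtle point here is upgrading ``the expected number of bad vertices is tiny'' to ``the probability of many bad vertices is $e^{-\omega(n)}$'', which needs a negative-association input for uniformly random Hamilton cycles; failing that one can fix a single Hamilton decomposition of $\Gamma$ and argue along a typical choice of cycles to peel.)

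The heart of the matter is the counting lemma itself, and I expect it to be the main obstacle. I would first bound the number of \emph{$2$-factors} of a $d$-regular robust outexpander $G$ from below: its adjacency matrix $A$ has $A/d$ doubly stochastic, so by the van der Waerden / Egorychev--Falikman permanent bound $\mathrm{per}(A)\ge d^n\,n!/n^n=(d/e)^{(1-o(1))n}$, and the cycle covers counted by $\mathrm{per}(A)$ that repeat an edge (a ``digon'') can be discarded at a cost of only a $d^{-\Omega(n)}$ factor using Br\'egman's bound together with $n/d=O(\eps^{-1})$, leaving $(d/e)^{(1-o(1))n}$ genuine $2$-factors. A switching argument then upgrades this to Hamilton cycles: given a $2$-factor with $k\ge 2$ components, robust expansion supplies many pairs of edges, one in each of two chosen components, whose re-routing merges those components, and comparing the number of such merges with the number of reverse moves bounds the $2$-factors with at least two components by a $d^{-\Omega(n)}$ fraction of the total. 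Hence all but a vanishing fraction of $2$-factors of $G$ are Hamilton cycles, giving $d^{(1-o(1))n}$ of them, exactly what the peeling argument consumes. The permanent bound and the martingale concentration are essentially routine; the delicate part is making the switching estimate work at this level of sparsity, where — unlike in the Dirac-type (Cuckler--Kahn) setting — one cannot lean on a minimum degree of $n/2$, and all the remaining loss must fit inside the generous $5\eps$ slack.
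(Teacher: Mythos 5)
Your top-level strategy --- peel Hamilton cycles one at a time, count choices per step via a permanent lower bound for $2$-factors, finish with K\"uhn--Osthus --- is the same as the paper's, and the Egorychev--Falikman/Br\'egman step matches the paper's use of Corollary~2.8 of \cite{FKS12}. But the two places you yourself flag as delicate are genuine gaps, and the paper's proof is organized precisely to avoid both. First, your ``counting lemma'' (every $d$-regular robust outexpander with $d\geq\eps n$ has $d^{(1-o(1))n}$ Hamilton cycles) is not established by your sketch and, as stated, discards the hypothesis that actually makes the count work. The switching claim that $2$-factors with at least two components form a $d^{-\Omega(n)}$ fraction is false already for $K_n$: the number of $2$-factors with exactly two cycles exceeds the number of Hamilton cycles by a factor of order $\log n$. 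More importantly, the real obstacle at degree $\eps n$ is not merging components but \emph{closing a near-Hamilton path into a cycle}: P\'osa rotations from the two endpoints produce endpoint sets $A,B$ of size only $\Theta(\eps^2 n)$, and robust $(\nu,\tau)$-expansion (with $\tau$ a fixed constant) gives no edge between two specific sets of that size. The paper's Theorem~\ref{counting} carries the extra hypothesis of $\gamma n^2$ edges between every pair of sets of sizes $\delta^2 n$ and $(1/2-\delta)n$ exactly to supply these closing edges --- this is where $c>1/2$ enters --- and it implements this by setting aside a sparse reservoir graph $F$ with $n^{1.6}$ edges between all such pairs, consumed at rate only $O(\sqrt{n\log n})$ per cycle so the property survives all $\Theta(n)$ iterations. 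Your reduction to robust expansion alone throws this away; a Hamilton-cycle count of the form $d^{(1-o(1))n}$ for sparse robust expanders is not available (the concluding remarks of the paper point to the absence of such sparse analogues).

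Second, your scheme for keeping $\Gamma_i$ a robust expander through $\Theta(n)$ deletions rests on a martingale/negative-association argument for uniformly random Hamilton cycles that you acknowledge you cannot complete; the union bound over $2^n$ sets $S$ needs failure probability $e^{-\omega(n)}$ per set, and Azuma over $\Theta(n)$ peels with $O(1)$ increments does not deliver that without the unproven concentration input. The paper avoids the issue structurally: it partitions $E(\Gamma)$ once into $G\cup F\cup R$, where $R$ is a robust expander that the peeling never touches (so the leftover graph is a robust expander by monotonicity), and it peels only from $G\cup F$, restoring regularity of the peeled graph after each step by a small local edge exchange (the sets $E_G,E_F$ in Lemma~\ref{lem:2-factors}). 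If you want to salvage your approach, you should import both devices: reserve a never-touched robust expander $R$ for the K\"uhn--Osthus endgame, and reserve a slowly-consumed dense-bipartite-connector $F$ to run the rotation--extension closure; with those in place your permanent count and the $t!$ bookkeeping go through as you describe.
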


The proof of this statement follows the same general strategy as the proof of Proposition~\ref{thm:upper_bound}.
Namely, we remove Hamilton cycles one by one, giving a lower bound on the number of possibilities at each step.
The two additional ingredients we need are a way to construct many Hamilton cycles in a regular graph,
and sufficient conditions for the existence of a Hamiltonian decomposition in a graph.

The proof consists of three parts.
In the first part, we split $\Gamma$ into three edge-disjoint spanning graphs $G$, $F$, and $R$.
The graph $G$ contains most of the edges of $\Gamma$,
$F$ satisfies a pseudo-randomness condition similar to the expander mixing lemma,
and $R$ is a robust expander, a concept that is defined below.

In the second part, we use some ideas of Ferber, Krivelevich, and Sudakov~\cite{FKS12}
to repeatedly find Hamilton cycles in $G \cup F$.
Their approach, based on the permanent of the adjacency matrix,
gives us at every step many $2$-regular subgraphs of the current graph which are unions of a small number of cycles.
Using edges of $F$ we can transform each of these subgraphs into a Hamilton cycle.
This part of the proof gives the asymptotic count in the theorem.
We stop at some point when all but a small constant fraction of the edges of $G \cup F$ have been covered.

In the third part of the proof, we need to show that the remaining edges of $\Gamma$ can be partitioned into Hamilton cycles.
Here, we use a recent result of K\"{u}hn and Osthus~\cite{KuOs14}.
Given parameters $0<\nu\leq \tau<1$, a graph $\cG$ and a set of vertices $S \subseteq V(\cG)$,
the robust $\nu$-neighborhood of $S$ is the set of vertices that have at least $\nu n$ neighbors in $S$, and is denoted by $RN_{\nu,\cG}(S)$.
A graph $\cG$ is a robust $(\nu,\tau)$-expander if
\[
RN_{\nu,\cG}(S) \geq |S|+\nu n \text{ for all $S$ such that $ \tau n\leq |S|\leq (1-\tau) n $}.
\]
Note in particular that this is a monotone property under addition of edges.
The following powerful theorem of K\"{u}hn and Osthus~\cite{KuOs14},
which they derived from the corresponding digraph result~\cite{KuOs13},
provides sufficient conditions for a graph to admit a Hamiltonian decomposition.
\begin{theorem}[Theorem 1.2 in \cite{KuOs14}]
\label{thm:suff_conditions}
For every $\alpha>0$ there exists $\tau>0$ such that for every $\nu>0$ there exists $n_0(\alpha,\nu,\tau)$ for which the following holds.
Suppose that
\begin{itemize}
\item $\cG$ is an $r$-regular graph on $n\geq n_0$ vertices, where $r \geq \alpha n$ is even, and
\item $\cG$ is a robust $(\nu,\tau)$-expander.
\end{itemize}
Then $\cG$ has a Hamiltonian decomposition.
\end{theorem}

Since $R$ is a robust expander, the graph consisting of all remaining edges of $\Gamma$ that are not covered by the Hamilton cycles found in the second part
satisfies the conditions of Theorem~\ref{thm:suff_conditions},
and therefore we can complete our packing of edge-disjoint Hamilton cycles to a Hamiltonian decomposition of $\Gamma$.

Throughout the proof, we sometimes identify a graph with its edge set.
For the sake of clarity of presentation, we omit floor and ceiling signs.

\section{Proof of Theorem~\ref{counting}}
\label{sec:proof}
Given $c, \gamma>0$ and $0<\eps<\frac{1}{10}$, choose $\alpha = 3 \eps c$ and let $\tau = \tau(\alpha)$ be as in Theorem~\ref{thm:suff_conditions}.
Next, choose $\delta \leq \min(\eps c/5,\tau/2)$ and set $\nu=\min (\delta, \eps \gamma/2)$. Finally,
let $n_0 = n_0(\alpha, \nu,\tau)$ be as in Theorem~\ref{thm:suff_conditions}.
Throughout this section we will always assume that $n>n_0$ and is sufficiently large whenever it is necessary.

We first show how to find three edge-disjoint graphs $G$, $F$, and $R$ with the desired properties.
\begin{lemma}
\label{lem:main}
The edges of $\Gamma$ can be partitioned into three subgraphs $G$, $F$, and $R$ such that:
\begin{itemize}
\item
The graph $G$ is $d$-regular with even degree $d\geq (1-2 \eps)r$.
\item
For any two sets $A,B\subseteq [n]$ of sizes at least $|A|\geq\delta^2 n$ and $|B|\geq(1/2-\delta)n$, there exist at least $n^{1.6}$ edges in $F$ between $A$ and $B$.
\item
The graph $R$ is a robust $(\nu,\tau)$-expander.
\end{itemize}
\end{lemma}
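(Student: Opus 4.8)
The plan is to extract $R$ and $F$ greedily by random-type constructions that are degree-balanced, and let $G$ be whatever remains; the point is that each of $R$ and $F$ needs only a small degree (a constant, respectively an $o(r)$ fraction), so $G$ retains almost all of the $r$ edges at each vertex. Concretely, I would first construct $R$ as a sparse random-ish regular spanning subgraph of $\Gamma$ of some constant even degree $d_R = d_R(\nu,\tau)$, chosen large enough that $R$ is whp a robust $(\nu,\tau)$-expander. A clean way to do this is to take a uniformly random $d_R$-regular spanning subgraph of $\Gamma$ (for instance, via a random proper edge-colouring of $\Gamma$ and taking $d_R$ colour classes, or by repeatedly peeling off perfect/near-perfect matchings of $\Gamma$ using the fact that a dense regular graph is $1$-factorizable after a parity fix, then taking $d_R$ of these $1$-factors at random). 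Standard concentration (Chernoff/Azuma on the edge-colouring, or on the random matching process) shows that for any fixed $S$ with $\tau n \le |S| \le (1-\tau)n$, the robust neighbourhood $RN_{\nu,R}(S)$ is whp at least $|S| + \nu n$ — here one uses the hypothesis that $\Gamma$ has $\ge \gamma n^2$ edges between any $A$ with $|A|\ge \delta^2 n$ and $B$ with $|B|\ge (1/2-\delta)n$, so $\Gamma$ itself expands robustly, and a random sparse subgraph inherits the expansion with the weaker parameter $\nu$ — and a union bound over the $\le 2^n$ choices of $S$ finishes it once $d_R$ is a large enough constant.

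Next, having removed $R$, I would carve out $F$ from the remaining $(r - d_R)$-regular-ish graph. The requirement on $F$ is that between every $A$ with $|A| \ge \delta^2 n$ and every $B$ with $|B| \ge (1/2-\delta)n$ there are at least $n^{1.6}$ edges of $F$. Since $\Gamma$ (hence $\Gamma \setminus R$, as $d_R$ is a constant) has at least, say, $\tfrac12 \gamma n^2$ edges between any such pair, I take $F$ to be a random subgraph obtained by including each remaining edge independently with probability $p = n^{-1/3}$ (or, to keep $F$ regular, a random regular subgraph of degree $\approx p r$). Then the expected number of $F$-edges between a fixed such pair $(A,B)$ is $\ge \tfrac12 \gamma p n^2 = \tfrac12\gamma n^{5/3} \gg n^{1.6}$, and Chernoff gives failure probability $e^{-\Omega(n^{5/3})}$ for a fixed pair; a union bound over the at most $4^n$ pairs $(A,B)$ is comfortably absorbed. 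The degree of $F$ is whp $(1+o(1)) p r = O(n^{2/3}) = o(r)$, and one can trim it to be exactly regular with even degree by removing a sparse regular subgraph (pushing the removed edges into $G$), at negligible cost.

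Finally I set $G = \Gamma \setminus (R \cup F)$. Its degree is $r - d_R - d_F \pm O(1)$ where $d_R = O(1)$ and $d_F = o(r)$; after a final parity adjustment (move one perfect matching between $G$ and $R$, or between $G$ and $F$, to make $d_G$ even — this changes $d_R$ or $d_F$ by $1$, still within the stated slack, and $R$'s expansion is monotone so adding an edge back cannot hurt it) we get $G$ regular of even degree $d_G \ge (1-2\eps)r$, using $\delta \le \eps c/5$ and $r \ge cn$ to see $d_R + d_F = O(1) + o(r) \le 2\eps r$ for large $n$. The main obstacle is the simultaneous-over-all-$S$ robust expansion of the constant-degree graph $R$: one must choose the construction of $R$ so that its robust neighbourhoods are concentrated enough to survive a union bound over exponentially many vertex sets, which is why $R$ should be built from independent structured pieces (colour classes or random $1$-factors) rather than naively, and why the expansion parameter $\nu$ is taken much smaller than the edge-density parameter $\gamma$ of $\Gamma$.
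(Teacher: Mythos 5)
There is a genuine gap, and it is fatal to the third bullet. You take $R$ to be a spanning subgraph of \emph{constant} degree $d_R=d_R(\nu,\tau)$, but a robust $(\nu,\tau)$-expander with constant $\nu>0$ must have linear degree: by definition $RN_{\nu,R}(S)$ consists of the vertices with at least $\nu n$ neighbours in $S$ \emph{inside $R$}, so if every vertex of $R$ has degree $O(1)<\nu n$ then $RN_{\nu,R}(S)=\emptyset$ for every $S$, and $R$ is not a robust $(\nu,\tau)$-expander for any $\tau$. No amount of concentration or clever construction can rescue a constant-degree $R$. This is why the paper reserves a constant \emph{fraction} of the edges for $R$ (each edge goes to $R^*$ with probability $\eps$, giving $R$ degree $\approx\eps r=\Theta(n)$), chooses $\nu\leq\eps\gamma/2$, and verifies expansion by the edge-counting argument between sets of sizes $\delta^2 n$ and $(1/2-\delta)n$. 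The error also propagates to the end of the main proof: the leftover graph $R'$ must be a robust expander in order to invoke the K\"uhn--Osthus theorem, and in the paper this is inherited (by monotonicity) from $R$; with your $R$ there is no source of expansion for $R'$, since the unused $\eps r$-regular remainder of $G\cup F$ has no controlled structure.

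Two secondary problems are worth noting. First, your constructions of $R$ (random $1$-factors, moving a perfect matching for a parity fix) presuppose perfect matchings, which need not exist: $n$ may be odd, as in the motivating case $\Gamma=K_n$. Second, the genuinely delicate part of this lemma is producing an \emph{exactly} $d$-regular $G$; defining $G=\Gamma\setminus(R\cup F)$ only yields a regular graph if both $R$ and $F$ are exactly regular, and making a random subgraph exactly regular while retaining its pseudo-random properties is not a routine trimming step. The paper instead lets $F$ and $R^*$ be irregular, and extracts a regular $G$ from the remaining graph $G^*$ via a max-flow/min-cut argument (Lemma~\ref{lem:regular_subgraph}), dumping the discarded edges $G^*\setminus G$ into $R$, where they can only help by monotonicity. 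Your proposal would need an analogous regularization device; as written, the first bullet is not established either.
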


\begin{proof}
Consider the edge partition of $\Gamma$ into three edge-disjoint random subgraphs $F$, $R^*$, and $G^*$,
where every edge gets into $F$ with probability $1/\log n$, into $R^*$ with probability $\eps$,
and into $G^*$ with probability $1-\eps-1/\log n$.

First, we need to find a dense regular spanning subgraph $G\subseteq G^*$. To do so we will use the following lemma, which we prove in the Appendix to this paper.
\begin{lemma}\label{lem:regular_subgraph}
Let $\eps_0, \gamma_0>0$ be constants, $c_0\geq \eps_0$, and let $n$ be sufficiently large.
Let $G^*$ be an $n$-vertex graph whose vertex degrees are all between $c_0 n - n^{2/3}$ and $c_0 n +n^{2/3}$, such that there are at least $\gamma_0 n^2$ edges between any two subsets $A,B \subseteq V(G^*)$ satisfying $|A|\geq c_0 n /3$ and $|B|\geq n/2$.
Then $G^*$ has a regular spanning subgraph of degree $2d$, where $d = \lt\lceil \frac{(c_0-\eps_0)n}{2}\rt\rceil$.
\end{lemma}

We claim that with high probability $G^*$ satisfies the conditions of this Lemma, with the parameters $c_0 = (1 - \eps - 1/\log(n))\frac{r}{n}$ and $\gamma_0 = \frac{\gamma}{2}$.
Indeed, this follows from a straightforward application of Chernoff's inequality and the union bound
after observing that $c_0/3 > \delta^2$.
We apply Lemma~\ref{lem:regular_subgraph} with $\eps_0 = \frac{\eps r}{2n}<c_0$, obtaining the desired $d$-regular graph $G$
with $d\geq (1-2 \eps)r$.

Next, consider the graph $F$.
Between any two sets $A,B\subseteq [n]$ of sizes $|A|\geq\delta^2 n$ and $|B|\geq(1/2-\delta)n$,
there are at least $\gamma n^2 $ edges of $\Gamma$.
Therefore, the number of edges between $A$ and $B$ in $F$
is distributed binomially with parameters $|E_\Gamma(A,B)| \geq \gamma n^2$ and $\log^{-1} n$.
Using Chernoff-type estimates again, we can see that the probability of having less than $n^{1.6}$ edges in $F$ between $A$ and $B$ is $\ll 2^{-2n}$.
Taking a union bound over all pairs of subsets shows that
with high probability $\Gamma$ has the desired property.

Note that the same argument as above using Chernoff-type estimates and the union bound implies that
with high probability there are at least $\eps \gamma n^2/2 $ edges of $R^*$ between any two sets $A,B\subseteq [n]$ of sizes $|A|\geq\delta^2 n$ and $|B|\geq(1/2-\delta)n$. Using this we prove that $R^*$ is robust expander.
First fix a subset $S$ of the vertices of $R^*$ such that $\delta^2 n<\tau n \leq |S| < n/2$, and consider the set $B$ of vertices of
$R^*$ which have less than $\nu n\leq \eps \gamma n/2$ neighbors in $S$.
If there are at least $(1/2-\delta)n$ such vertices, then between $B$
and $S$ there are less than $\eps \gamma n^2/2$ edges, contradiction. Therefore the
robust $ \nu $-neighborhood of $S$ has size at least $(1/2+\delta)n \geq |S|+\nu n$. Now consider a subset
$S$ of size  $ n/2 \leq |S| \leq (1-\tau)n\leq n- 2\delta n$ and let $A$ be the set of vertices of $R^*$
which have less than $\nu n\leq \eps \gamma n/2$ neighbors in $S$.
If there are at least $\delta^2 n$ such vertices then there are less than $\eps \gamma n^2/2$ edges between $A$
and $S$, again obtaining a contradiction. Therefore the robust
$ \nu $-neighborhood of $S$ has size at least $n -\delta^2 n \geq |S|+\nu n$ and so
$R^*$ is a robust $(\nu,\tau)$-expander. Finally, define $R$ to be the union of $R^*$ together with all the edges from  $G^* \setminus G$.
By monotonicity, $R$ is a robust expander as well.
\end{proof}

As we already mentioned above, our proof proceeds by successively removing Hamilton cycles from $G \cup F$.
The following lemma provides a lower bound on the number of choices in each such step.

\begin{lemma}
\label{lem:2-factors}
Let $\eps>0$ and let $G$ and $F$ be edge-disjoint graphs on the vertex set $[n]$
such  that the graph $G$ is $d$-regular with $d\geq \eps r=\eps cn$
and for any two sets $A,B\subseteq [n]$ of sizes at least $|A|\geq\delta^2 n$ and $|B|\geq(1/2-\delta)n$,
there are at least $d n^{0.6}$ edges of $F$ between $A$ and $B$.
Then there exist at least $(1+o(1))^n(d/e)^n$ Hamilton cycles $H$ in $G\cup F$ with the following properties.
For each cycle $H$
there are edge sets $E_G \subseteq G$ and $E_F \subseteq F$ such that
\begin{itemize}
\item
$G':=(G\cup E_F)\setminus (H\cup E_G)$ is $(d-2)$-regular.
\item
$F':=F\setminus (H \cup E_F)$ has the property that
for any $A,B\subseteq [n]$ of sizes at least $|A|\geq\delta^2 n$ and $|B|\geq(1/2-\delta)n$, there are at least $(d-2) n^{0.6}$ edges of $F'$ between $A$ and $B$.
\end{itemize}
\end{lemma}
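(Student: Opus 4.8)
The plan is to obtain the Hamilton cycles by first using the permanent of the adjacency matrix of $G$ to produce exponentially many $2$-factors of $G$, and then rerouting each of them through a sublinear number of edges of $F$ to make it Hamiltonian, while keeping careful track of which edges are added and removed.

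\textbf{Counting $2$-factors.} Since $G$ is $d$-regular, $\tfrac{1}{d}A_G$ is doubly stochastic, so by the Falikman--Egorychev theorem (the van der Waerden conjecture) I get $per(A_G)=d^n\,per(\tfrac{1}{d}A_G)\ge d^n\cdot\tfrac{n!}{n^n}=n!\,(d/n)^n=(1+o(1))^n(d/e)^n$ by Stirling's formula. A permutation contributing to $per(A_G)$ is exactly a spanning subgraph of $G$ all of whose components are cycles, together with a choice of orientation on each cycle of length at least $3$; since every $2$-factor of $G$ arises in this way from at most $2^n$ permutations, $G$ has at least $(1+o(1))^n(d/e)^n$ $2$-factors. (Permutations having a transposition yield a single edge rather than a cycle; I argue next that such permutations form a negligible fraction, so they can be discarded.)

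\textbf{Passing to well-structured $2$-factors.} I would retain only those $2$-factors $H_0$ that have at most $n^{0.6}$ components and contain one component spanning at least $(1/2-\delta)n$ vertices, and the claim I need is that a positive fraction of all $2$-factors of $G$ — hence still at least $(1+o(1))^n(d/e)^n$ of them — are of this type. To see this, for a uniformly random $2$-factor of $G$ I would express the expected number of components of each prescribed length through the ratios $per(A_{G\setminus T})/per(A_G)$, taken over vertex sets $T$ of that size, bounding the numerator by Br\'egman's theorem and the denominator by the van der Waerden bound, and using that the degree deficiency created by deleting $T$ is spread out enough to sharpen the Br\'egman estimate. This should give that the expected number of components is $O(\log n)$ (so the first condition fails with probability $o(1)$ by Markov's inequality), while a companion estimate — exploiting that the component through a fixed vertex has a nearly uniform length distribution — gives that the second condition holds with probability bounded away from $0$; the same estimates also control the fraction of $2$-factors carrying a transposition-induced single edge. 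This is precisely the kind of permanent computation carried out in~\cite{FKS12}, and it is the step I expect to be the main technical obstacle.

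\textbf{Rerouting through $F$.} Fix such an $H_0$, with a large component $C_0$ of order at least $(1/2-\delta)n$ and the remaining components $C_1,\dots,C_t$, $t\le n^{0.6}$. I would turn $H_0$ into a Hamilton cycle $H$ of $G\cup F$ by repeatedly absorbing components into the growing large cycle, taking all newly added edges from $F$: to absorb the next component I delete one edge of the current cycle and one edge of that component and join the two resulting paths by two edges of $F$. While the set $R$ of not-yet-absorbed vertices has size at least $\delta^2 n$ such edges of $F$ exist, because $F$ has at least $dn^{0.6}>0$ edges between $R$ and its complement (the current cycle always spans at least $(1/2-\delta)n$ vertices), and an endpoint in $R$ of such an $F$-edge can be made a path-endpoint by breaking its component at one of the two incident edges. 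Once $|R|<\delta^2 n$ the few remaining vertices are absorbed by a short additional argument — here one uses, for a positive fraction of the $H_0$ retained above, that no component of $H_0$ lies inside the small set of $F$-isolated vertices, so the leftover can still be threaded through $F$. In all this deletes at most $2t\le 2n^{0.6}$ edges of $H_0$ and adds at most $2t\le 2n^{0.6}$ edges of $F$; in particular $H\cap G\subseteq H_0$.

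\textbf{Bookkeeping.} Finally I set $E_F:=\varnothing$ and let $E_G\subseteq G$ be the at most $2n^{0.6}$ edges of $H_0$ that were deleted in the rerouting. Then $H\cup E_G=H_0\cup(H\cap F)$, so $(G\cup E_F)\setminus(H\cup E_G)=G\setminus H_0$, which is $(d-2)$-regular because $H_0$ is a $2$-factor of $G$; and $F\setminus(H\cup E_F)$ is $F$ with at most $2n^{0.6}$ edges removed, so between any admissible $A,B$ it has at least $dn^{0.6}-2n^{0.6}=(d-2)n^{0.6}$ edges. The map $H_0\mapsto H$ is at most $2^{o(n)}$-to-one: from $H$ one recovers $H\cap F$, and removing these at most $2n^{0.6}$ edges from the cycle $H$ leaves at most $2n^{0.6}$ paths, from which $H_0$ (and hence $E_G$) can be reassembled in at most $(2n^{0.6})!=2^{o(n)}$ ways. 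Hence this construction produces at least $(1+o(1))^n(d/e)^n/2^{o(n)}=(1+o(1))^n(d/e)^n$ distinct Hamilton cycles with the required properties.
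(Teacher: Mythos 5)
Your overall architecture (count $(\le 2)$-factors of $G$ by permanent estimates, then repair each one into a Hamilton cycle using few edges of $F$, then do bookkeeping) matches the paper's, and your bookkeeping at the end is actually cleaner than the paper's (because your rerouting adds only $F$-edges, you can take $E_F=\varnothing$ and $E_G=H_0\setminus H$, whereas the paper's repair also uses $G$-edges and therefore needs an extra substitution gadget). But the rerouting step has a genuine gap. To absorb a component $C_i$ into the current long cycle by deleting one edge from each and adding two edges of $F$, you need \emph{two} $F$-edges in correlated positions: if the deletions produce paths with endpoint pairs $\{u_1,u_2\}$ (on the cycle) and $\{x_1,x_2\}$ (on $C_i$), you need both $(u_1,x_1)$ and $(u_2,x_2)$ (or the other pairing) to lie in $F$. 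What you actually establish is the existence of \emph{one} $F$-edge between $R$ and its complement; from that single edge $(w,z)$ you get only $O(1)$ candidate closing pairs (the neighbours of $w$ on $C_i$ and of $z$ on the cycle), and the density hypothesis on $F$ — which concerns edges between two \emph{linear-sized} sets — says nothing about these specific pairs. This is exactly the difficulty the paper's proof is built around: it first reduces to a single long path and then runs a P\'osa-type rotation argument inside $G$ (Claim~\ref{cla:rotator}) to manufacture sets $A,B$ of $\ge\delta^2n$ admissible endpoints at each end; only then does the $F$-density between two large sets produce a closing edge. Without rotations (or some equivalent device generating linearly many endpoint options), the merging step fails.

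A second, smaller gap: your scheme needs a positive fraction of the $2$-factors to contain a component on $\ge(1/2-\delta)n$ vertices, so that the "current cycle" is always large enough for the $F$-density hypothesis to apply. This does not follow from bounding the number of components by $n^{0.6}$ (that only forces a component of size $n^{0.4}$), and the giant-component statement for a random $2$-factor of a dense regular graph is a substantial result you assert rather than prove. The paper sidesteps it entirely: in its extension step (Claim~\ref{cla:extension}) any cycle on $\le n/2$ vertices either has fewer than $d$ vertices, in which case a $G$-edge leaves it by regularity, or has $\ge d\ge\delta^2n$ vertices, in which case an $F$-edge leaves it — no giant component is ever needed. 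Your endgame for $|R|<\delta^2n$ is likewise only gestured at. The counting input (Falikman--Egorychev versus the paper's citation of Corollary~2.8 of \cite{FKS12}) and the final injectivity argument are fine.
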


\noindent
{\bf Remark.}\, In order to keep the graph obtained from $G$ after removing the cycle $H$ regular we need to add and delete some additional edges.
This is achieved by using the sets of edges $E_G$ and $E_F$. Note that the resulting graphs $G',F'$, and $H$ are edge-disjoint.

\vspace{0.1cm}
\begin{proof}
We call a spanning subgraph of $G\cup F$ a {\em partial HC} (for Hamilton cycle) if one of its components is a path,
and all other components are cycles or isolated edges.
We also call a spanning collection of vertex-disjoint cycles and isolated edges a {\em $(\leq 2)$-factor}.
The following claim allows us to reduce the number of connected components in a $2$-factor
by concatenating two of its cycles into a path.

\begin{claim}
\label{cla:extension}
Let $H$ be a $2$-factor in $G\cup F$ with more than one cycle. There exist edges $e\in (G\cup F)\setminus H$ and $e_1', e'_2\in H$
such that $H\cup\{e\}\setminus \{e'_1, e'_2\}$ is a partial HC with one less component (and one less edge) than $H$.
\end{claim}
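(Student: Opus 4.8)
The plan is to reduce everything to finding one \emph{crossing edge}: an edge $e\in(G\cup F)\setminus H$ whose two endpoints lie on two different cycles of the $2$-factor $H$. Suppose such an $e=uv$ is found, with $u$ on a cycle $C$ and $v$ on a cycle $C'\ne C$. Let $e_1'$ be one of the two edges of $H$ at $u$ and $e_2'$ one of the two edges of $H$ at $v$. Deleting $e_1'$ turns $C$ into a Hamilton path of $V(C)$ with endpoint $u$, deleting $e_2'$ turns $C'$ into a Hamilton path of $V(C')$ with endpoint $v$, and adding $e$ splices these into a single path on $V(C)\cup V(C')$; every other cycle of $H$ is left intact. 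Hence $(H\cup\{e\})\setminus\{e_1',e_2'\}$ has exactly one path component and all remaining components are cycles, i.e.\ it is a partial HC, with one fewer component and (since we added one edge and removed two) one fewer edge than $H$. So it remains only to exhibit a crossing edge.

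I would find it by case analysis on whether $H$ contains a short cycle. Fix any cycle $C$ of $H$ and any vertex $u\in V(C)$. In $G$ the vertex $u$ has $d$ neighbours, at most two of which are joined to $u$ by edges of $H$, so $u$ has at least $d-2$ neighbours $v$ with $uv\in G\setminus H$. If for some choice of $C$ and $u$ one such $v$ lies outside $V(C)$, then, since $H$ spans $[n]$, $v$ belongs to another cycle of $H$ and $e=uv$ is a crossing edge. Otherwise, for \emph{every} cycle $C$ of $H$ all of the (at least $d-2$) non-$H$ $G$-neighbours of any vertex of $C$ lie in $V(C)$, which forces $|V(C)|\ge d-1$. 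Since $d\ge\eps c n$ and $\delta\le\eps c/5$, we have $\delta^2\le \eps^2c^2/25<\eps c\le d/n$, so for $n$ large every cycle of $H$ then has at least $\delta^2 n$ vertices.

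In this remaining case I use the mixing property of $F$. Let $C_1$ be a cycle of $H$ of minimum order; since $H$ has at least two cycles, $|V(C_1)|\le n/2$, and by the previous paragraph $|V(C_1)|\ge\delta^2 n$. Take $A=V(C_1)$ and $B=[n]\setminus V(C_1)$, so $|A|\ge\delta^2 n$ and $|B|=n-|V(C_1)|\ge n/2\ge(1/2-\delta)n$. By hypothesis $F$ has at least $d\,n^{0.6}>0$ edges between $A$ and $B$, and any such edge joins $V(C_1)$ to some other cycle of $H$; since $H$ is a vertex-disjoint union of cycles it contains no edge between distinct cycles, so this edge lies in $(G\cup F)\setminus H$ and is the desired crossing edge. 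The only point requiring care is the ``no short cycle'' case: one must check that the forced bound $|V(C)|\ge d-1$ really does exceed $\delta^2 n$ (this is exactly where the choice $\delta\le\eps c/5$ is used) and that a smallest cycle is simultaneously large enough and small enough to play the role of $A$; beyond that the argument is an immediate consequence of the regularity of $G$ and the mixing condition on $F$.
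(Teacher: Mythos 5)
Your proof is correct and follows essentially the same route as the paper: find a single edge of $(G\cup F)\setminus H$ crossing between two cycles (using the $G$-degree when some cycle is short, and the $F$-mixing property applied to a smallest cycle otherwise), then splice by deleting one $H$-edge at each endpoint. Your case split and the verification that the forced cycle length exceeds $\delta^2 n$ match the paper's argument, just spelled out in more detail.
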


\begin{proof}
Consider an arbitrary cycle $C$ of $H$ on at most $n/2$ vertices.
If $|C|<d$ then there exists an edge $e\in E(G)$ connecting $C $ with a different component of $H$.
If $|C|\geq d\geq \eps cn \geq \delta^2 n$, then again there exists an edge $e\in E(F)$ connecting $C$ with a different component of $H$.
In both cases, obviously $e\not\in H$.
Now any two edges $e_1', e'_2\in E(H)$ adjacent to different endpoints of $e$ satisfy the requirements of the claim.
\end{proof}

The following technical claim allows us to either extend a path in a partial HC,
thereby reducing the number of connected components, or to close this path into a cycle.

\begin{claim}
\label{cla:rotator}
Let $H$ be a a partial HC in $G\cup F$. Then one of the following holds:
\begin{itemize}
\item
There exist edges $e,e_1,e_2\in (G\cup F)\setminus H$ and $e',e'_1, e'_2\in H$
such that $H\cup\{e\}\setminus \{e'\}$, or $H\cup\{e\}$, or $H\cup\{e,e_1,e_2\}\setminus \{e',e'_1, e'_2\}$,
or $H\cup\{e,e_1,e_2\}\setminus \{e'_1, e'_2\}$
is a partial HC with one less component than $H$ (and at least as many edges as $H$).
\item
There exist edges $e_1, e_2, e_3\in (G\cup F)\setminus H$, $e'_1, e'_2, e'_3\in H$, and $f\in F\setminus H$
such that $H\cup\{e_1, e_2,e_3,f\}\setminus \{e'_1, e'_2, e'_3\}$ is a $(\leq 2)$-factor in $G\cup F$
with the same number of components as $H$ (and one more edge than $H$).
\end{itemize}
\end{claim}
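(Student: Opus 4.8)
The plan is to run the Pósa rotation--extension technique on the path component of $H$, using the edges of $G\cup F$ as rotation edges and the pseudorandomness of $F$ to force one of the two alternatives: rotating an endpoint of the path either eventually produces an endpoint with a neighbour in another component of $H$ --- in which case we absorb that component and get the first alternative --- or it never does, in which case the rotation--endpoint sets become large and the $F$-condition supplies an edge of $F$ joining two reachable endpoints, closing the path into a cycle and giving the second alternative. Write $P=p_0p_1\cdots p_\ell$ for the path component of $H$, with endpoints $a=p_0$ and $b=p_\ell$. If $a$ or $b$, say $a$, has a neighbour $w$ in $G\cup F$ in a component $C$ of $H$ other than $P$, then adding $aw$ (and, if $C$ is a cycle, deleting an edge of $C$ incident to $w$, exactly as in the proof of Claim~\ref{cla:extension}) merges $C$ into $P$ and yields a partial HC with one fewer component and at least as many edges as $H$; this is one of the first two moves listed in the first alternative. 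Since for $n$ large $a$ has at least $d-1\ge 2$ neighbours off $V(P)$ whenever $\ell\le 1$, this also settles the degenerate cases, so from now on assume $\ell\ge 2$ and that all neighbours of $a$ and of $b$ lie on $V(P)$.

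Now rotate: a rotation of a path $q_0q_1\cdots q_m$ on vertex set $V(P)$ at the endpoint $q_0$, along an edge $q_0q_k\in(G\cup F)\setminus\{q_{k-1}q_k\}$, deletes $q_{k-1}q_k$, adds $q_0q_k$, and gives a path from $q_{k-1}$ to $q_m$ on the same vertices, trading one edge of $H$ for one of $(G\cup F)\setminus H$. A single rotation at the $a$-end already produces a set $S_a$ of reachable $a$-side endpoints with $|S_a|\ge d-1\ge\delta^2 n$ (the $\ge d-1$ $G$-neighbours of $a$ on $V(P)$ other than $p_1$ give that many distinct endpoints, and no chord used lies in $H$), and the same holds at the $b$-end. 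We perform a bounded number of rotations, inspecting the current endpoints at each stage; if some endpoint acquires a neighbour in another component of $H$, we absorb that component as above. Carrying this out with at most two rotation-exchanges, then the absorption edge (and its one deletion when the absorbed component is a cycle), modifies $H$ into a partial HC with one fewer component by adding at most three edges of $(G\cup F)\setminus H$ and deleting at most three edges of $H$ --- the remaining two moves of the first alternative.

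Suppose instead no endpoint ever acquires a neighbour off $V(P)$. Then the rotation-endpoint sets are ``closed'', and I would use the standard consequence of Pósa's lemma that a closed endpoint set $S$ has $|N_{G\cup F}(S)|\le 3|S|$. Together with the $F$-condition --- between any set of size $\ge\delta^2 n$ and any set of size $\ge(1/2-\delta)n$ there are edges of $F$ --- this forces first that $|V(P)|>(1/2+\delta)n$ (otherwise $S_a$, of size $\ge\delta^2 n$, and $V(\Gamma)\setminus V(P)$, which has no $F$-edge to $S_a$, would contradict it), and then, after running the rotations on both ends, that some reachable $a$-side endpoint $a'$ and some reachable $b$-side endpoint $b'$ of the corresponding path are joined by an edge $f\in F\setminus H$. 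Adding $f$ closes that path into a cycle spanning $V(P)$ and leaves the other components of $H$ untouched: the resulting $(\le 2)$-factor has the same number of components as $H$ and one more edge, and is obtained from $H$ by adding $f$ together with at most three rotation edges and deleting at most three edges of $H$ --- this is the second alternative.

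The step I expect to be the main obstacle is the last one: showing that only a bounded number of rotation-exchanges --- as few as the statement allows --- suffices for the rotation-endpoint sets to reach the size $(1/2-\delta)n$ at which the pseudorandomness of $F$ can be invoked to produce the closing edge $f$, and keeping exact track of which edges are added and deleted so that the outcome matches one of the listed moves. This is where the quantitative relations among the parameters (in particular $\delta\le\eps c/5$ and $d\ge\eps cn$) and the full strength of the $F$-condition must be combined; once $f$ is in hand, verifying that the path realising it differs from $P$ by only the permitted amount of surgery is routine bookkeeping.
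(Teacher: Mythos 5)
Your overall strategy (absorb another component if a reachable endpoint has a neighbour off $V(P)$, otherwise rotate and close the path with an $F$-edge) is the same as the paper's, but the step you yourself flag as ``the main obstacle'' is precisely the heart of the proof, and your proposed substitute for it does not work. You appeal to the standard P\'osa closure bound $|N_{G\cup F}(S)|\le 3|S|$ for the set $S$ of reachable endpoints. That bound applies to the set of endpoints reachable by \emph{arbitrarily many} rotations, whereas the claim permits only three exchanged edges, so at most three rotations; a bounded number of naive rotations only yields endpoint sets of size $\Theta(d)$, which is far from the $(1/2-\delta)n$ threshold needed to invoke the $F$-condition for the closing edge $f$. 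Moreover, even granting large endpoint sets $S_a$ and $S_b$ at the two ends, you need, for the \emph{specific} pair $(a',b')$ joined by $f$, a single path on $V(P)$ having both $a'$ and $b'$ as its endpoints; with unrestricted rotations the $b$-side endpoints reachable depend on which $a$-side rotation was performed first, so the existence of $a'\in S_a$ and $b'\in S_b$ with $a'b'\in F$ does not by itself produce the required path.

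The paper resolves both issues with the Sudakov--Vu segment trick, which is absent from your proposal: split $P$ into $2\delta^{-1}$ segments, use pigeonhole to find disjoint segments $J_1,J_2$ in whose interiors $v_1$ and $v_t$ have $\ge\eps c\delta n/5$ neighbours, and perform the first two rotations with pivots confined to $J_2$ and $J_1$ respectively. Because $J_1$ and $J_2$ are disjoint, all resulting paths traverse the rest of $P$ in the same direction, so the endpoint produced at one end is independent of the choice at the other: one gets sets $A,B$ of size $\ge\delta^2 n$ such that for \emph{every} pair $(b,a)\in B\times A$ there is a $(b,a)$-path differing from $P$ in exactly two edges. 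A third rotation, with pivots taken from the neighbourhood of $A$ outside $J_1\cup J_2$ (of size $\ge(1/2+\delta)n-\delta n\ge n/2$ by the $F$-condition applied to $A$), then produces a set $S$ of size $\ge n/2$ of endpoints each reachable with exactly three exchanged edges, still compatibly with every $b\in B$; since $|N_F(B)|\ge(1/2+\delta)n$, the closing edge $f$ between $B$ and $S$ exists. Without this independence-plus-boosting mechanism your argument cannot reach the second alternative within the edge budget the claim allows, so the proposal has a genuine gap.
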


\begin{proof}

At least one component of $H$ is a path, choose $P=v_1, \ldots, v_t$ be a longest such. If $v_1$ or $v_t$ have one of their neighbors outside $P$,
say, $(v_1z)\in E(G)$ with $z\not \in P$,
then $(v_1z)$ connects $P$ to a component $D$ of $H$ with $z\in D$.
If $D$ is an isolated edge, then $H\cup (v_1z)$ satisfies the conditions of the first bullet.
Otherwise, $D$ is a cycle and we can choose an arbitrary edge $e'\in E(D)$ incident to $z$,
and $H\cup(v_1z)\setminus e'$ satisfies the conditions of the first bullet.

Therefore we can assume that all the neighbors of $v_1, v_t$ in $G$ lie in $P$.
Then we will use a variant of the celebrated rotation-extension technique of P\'osa~\cite{Posa}, which was
developed by the third author together with Vu (see the proof of Theorem 4.1 in \cite{SV}).

We split $P$ into $s=2\delta^{-1}$ segments $I_1, \ldots, I_s$ of size $|P|/s \leq \delta n/2$ each, and consider the graph $G$.
By the pigeonhole principle, there exist $p,q\in [s]$
such that $v_1$ has at least $d/s \geq \eps c \delta n/2$ neighbors in $I_p$,
and $v_t$ has at least $\eps c \delta n/2$ neighbors in $I_q$.
If $p=q$, split $I_p$ further into two segments
$J_1$ and $J_2$
such that the {\em interior} of $J_1$,
i.e., all its vertices except of its endpoints,
contain at least $\eps c \delta n/5$ neighbors of $v_1$,
and $v_t$ has at least $\eps c \delta n/5$ neighbors in the interior of $J_2$.
If $p\neq q$, simply set $J_1=I_p$ and $J_2=I_q$.
For every neighbor $v_i$ of $v_t$ from the interior of $J_2$,
we can {\em rotate} the path $P$ keeping its endpoint $v_1$ fixed and using $v_i$ as the {\em pivot}. This means that we add the edge $(v_i,v_t)$,
delete the edge $(v_i,v_{i+1})$, and consider the new path $v_1, \ldots, v_i, v_t, \ldots, v_{i+1}$
with endpoints $v_1$ and $v_{i+1}$.
Setting $A$ to be the set of successors of the neighbors of $v_t$ in the interior of $J_2$,
then for every $a\in A$, we obtain a $(v_1,a)$-path on the vertex set $V(P)$ differing from $P$ in exactly one edge from $J_2$.
Similarly, for every neighbor $v_j$ of $v_1$ in the interior of $J_1$,
every $(v_1,a)$-path can be further rotated using the pivot $v_j$. Note that since $J_1$ and $J_2$ are disjoint, all these paths we constructed
using pivots in $J_1$ traverse $J_2$ in the same direction. Therefore the new endpoints we will obtain do not depend on $a$.
Thus we constructed a set $B$ such that for every $a \in A, b \in B$,
there exists a $(b,a)$-path in $G$ on the vertex set $V(P)$
differing from $P$ in exactly two edges from $J_1$ and $J_2$.
Furthermore, $|A|,|B|\geq \eps c \delta n/5\geq \delta^2 n$,
and all these paths traverse all other segments of $P$ outside of $J_1$ and $J_2$
in the same direction.

If a vertex $x\in A\cup B$ has a neighbor outside $V(P)$ in $F\cup G$,
we extend $P$ similarly to the case discussed above,
obtaining a partial HC with one less component than $H$ (and at least as many edges as $H$),
as desired in the first bullet of the claim.
Therefore we can assume that all the neighbors of vertices in $A \cup B$ in $G$ lie in $P$.
Then for every $a\in A, b\in B$ and every neighbor $v_\ell$ of $a$ in $G$ outside of $J_1$ and $J_2$,
we can rotate every $(b,a)$-path further using the pivot $v_\ell$ to obtain a new endpoint $c$ and a $(b,c)$-path.
Note that, since we did not touch any edge outside $J_1$ and $J_2$ before and the parts of $P$ outside these intervals are traversed by all paths in the same direction,
the vertex $c$ is determined only by the neighbor $v_\ell$ of $a$ and does not depend on the other endpoint $b$.
Therefore, we obtain a set $S$ such that for every $b\in B$ and $c \in S$,
there exists a $(b,c)$-path in $G$ on the same vertex set as $P$, which  differs from $P$ in exactly three edges.
Furthermore, we know that $F$ has an edge between any two subsets of size $\delta^2n$ and $(1/2-\delta)n$.
This implies that  $|N_F(A)| \geq n-(1/2-\delta)n=(1/2+\delta)n$. Therefore
\[|S|\geq |N_F(A)|-|J_1\cup J_2|\geq (1/2+\delta)n- \delta n \geq n/2.\]
Similarly we also have that $|N_F(B)| \geq (1/2+\delta)n$. Thus there exists an edge $f=(b,c)\in E(F)$ between vertices $b\in B$ and $c\in S$.
This edge closes the $(b,c)$-path into a cycle, see Figure 1. Hence we can use the edge $f$, together with the three edges from the rotations of $P$ which produced the $(b,c)$-path,
to transform a partial HC into a $(\leq 2)$-factor, satisfying the second bullet of the claim.

\begin{figure}
\begin{center}
 \begin{tikzpicture}
    \tikzstyle{every node}=[draw,circle,fill=white,minimum size=6pt,
                            inner sep=0pt]


	\node (1) at (0,0) [label=left:$v_1$]{};
 	\node (2) at (2.8,0) [label={[label distance=0.1 cm]above:$b$}]{};
 	\node (3) at (4.4,0) [label={[label distance=0.1 cm]below:$v_j$}] {};
 	\node (4) at (7,0) [label={[label distance=0.1 cm]above:$v_\ell$}] {};
 	\node (5) at (8.5,0) [label={[label distance=0.1 cm]above:$c$}] {};
 	\node (6) at (10.7,0) [label={[label distance=0.1 cm]below:$v_i$}] {};
 	\node (7) at (12.3,0) [label={[label distance=0.1 cm]above:$a$}] {};
 	\node (8) at (15.1,0)[label=right:$v_t$] {};
 	
    \draw[line width = 0.3mm] plot [line width = 0.3mm,smooth,tension=0.6] coordinates{(1) (0.7,-0.1) (1.4,0) (2.1,0.1) (2)};
    \draw[line width = 0.4mm,red] (2) to node[midway,above,draw=none,text=black] {$e_2'$}(3);
    \draw[line width = 0.3mm] plot [smooth,tension=0.8] coordinates{(3) (5.3,-0.06) (6.3,0.1) (4)};
    \draw[line width = 0.4mm,red] (4) to node[midway,above,draw=none,text=black] {$e_3'$}(5);
    \draw[line width = 0.3mm] plot [smooth,tension=0.9] coordinates{(5) (9.2,0.12) (10,-0.1) (6)};
    \draw[line width = 0.4mm,red] (6) to node[midway,above,draw=none,text=black] {$e_1'$}(7);
    \draw[line width = 0.3mm] plot [smooth,tension=0.7] coordinates{(7) (12.8,-0.13) (13.6,0) (14.5,0.1) (8)};


    \draw[line width = 0.3mm,blue] (1) to [out=80,in=100] node[midway,above,draw=none,text=black] {$e_2$} (3);
 	\draw[line width = 0.3mm,blue] (6) to [out=80,in=100] node[midway,above,draw=none,text=black] {$e_1$} (8);
 	\draw[line width = 0.3mm,blue] (2) to [out=-80,in=-100] node[midway,above,draw=none,text=black] {$f$} (5);
	\draw[line width = 0.3mm,blue] (4) to [out=-80,in=-100] node[midway,above,draw=none,text=black] {$e_3$} (7); 	
 	
 	\node (101) at (0,0) {};
 	\node (102) at (2.8,0) {};
 	\node (103) at (4.4,0) {};
 	\node (104) at (7,0) {};
 	\node (105) at (8.5,0) {};
 	\node (106) at (10.7,0) {};
 	\node (107) at (12.3,0) {};
 	\node (108) at (15.1,0) {};

\end{tikzpicture}

\caption{An illustration of the second bullet of Claim \ref{cla:rotator}: Closing the partial HC into $(\leq 2)$-factor. The edges $e_1, e_2, e_3, f$ are added and the edges $e_1', e_2', e_3'$ are deleted. The edge set of the new cycle is $P\cup\{e_1,e_2,e_3,f\}\setminus \{e'_1, e'_2,e'_3\}$.}
\end{center}
\end{figure}
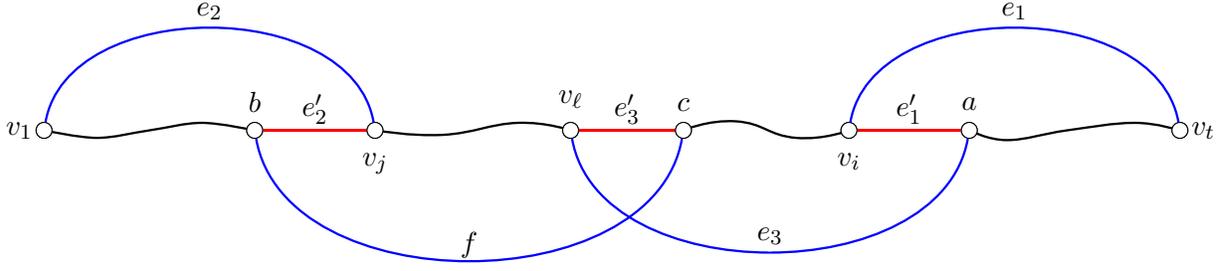

\end{proof}

\begin{observation}
\label{obs:substitution}
For any vertices $x,y\in [n]$ and any $S\subseteq [n]$ of size at most $|S|\leq n^{0.6}$,
there exist vertices $x_1, x_2, y_1, y_2\in [n]\setminus S$
such that $xx_1,yy_1, x_2y_2\in E(G)$ and $x_1x_2, y_1y_2 \in E(F)$.
\end{observation}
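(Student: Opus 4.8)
The plan is to build the path $x,x_1,x_2,y_2,y_1,y$ --- having $G$-edges $xx_1$, $x_2y_2$, $y_1y$ and $F$-edges $x_1x_2$, $y_1y_2$ --- working inward from the two ends. Put $S'=S\cup\{x,y\}$, so $|S'|\le n^{0.6}+2$, and set $A_1=N_G(x)\setminus S'$ and $A_2=N_G(y)\setminus S'$. Since $G$ is $d$-regular with $d\ge\eps cn$ while $\delta^2 n$ is much smaller, we have $|A_1|,|A_2|\ge d-|S'|\ge\delta^2 n$ once $n$ is large. Any vertex chosen in $A_1$ can play the role of $x_1$ --- it gives the $G$-edge to $x$ for free and lies outside $S$ --- and likewise $A_2$ for $y_1$; so it remains to choose $x_2,y_2\notin S'$ with $x_2y_2\in E(G)$, with $x_2$ having an $F$-neighbour in $A_1$, and with $y_2$ having an $F$-neighbour in $A_2$.

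The first real step is to locate the admissible sets for $x_2$ and $y_2$. Let $U_1$ be the set of vertices with at least two $F$-neighbours in $A_1$, and let $U_2=N_F(A_2)$ be the set of vertices with at least one $F$-neighbour in $A_2$. I claim $|U_1|,|U_2|\ge(1/2+\delta)n$. If $|U_1|<(1/2+\delta)n$ then $B:=[n]\setminus U_1=\{v:|N_F(v)\cap A_1|\le1\}$ has $|B|>(1/2-\delta)n$, so the hypothesis of Lemma~\ref{lem:2-factors} forces at least $dn^{0.6}$ edges of $F$ between $A_1$ and $B$; but that number is at most $\sum_{v\in B}|N_F(v)\cap A_1|\le n$, a contradiction for large $n$ since $d\ge\eps cn$. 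The bound on $U_2$ is the same, now taking $B:=[n]\setminus N_F(A_2)$, where already $dn^{0.6}>0$ gives the contradiction. (Demanding two $F$-neighbours in the $A_1$-case is exactly what will let me take $x_1\ne y_1$ at the end.)

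The crux is to find a $G$-edge with one endpoint in $U_1\setminus S'$ and the other in $U_2\setminus S'$, and this is the one place where we have no pseudo-randomness for $G$ --- it is merely $d$-regular, with $d$ possibly well below $n/2$. I would argue by counting. Partition $[n]=P\sqcup R\sqcup B\sqcup W$ with $P=U_1\cap U_2$, $R=U_1\setminus U_2$, $B=U_2\setminus U_1$, $W=[n]\setminus(U_1\cup U_2)$. A $G$-edge fails to have one endpoint in $U_1$ and one in $U_2$ exactly when it lies inside $R$, inside $B$, inside $W$, or joins $W$ to one of $R,B,P$; hence the number of such ``bad'' edges is at most $\tfrac12(|R|+|B|)d+|W|d=\tfrac12 nd+\tfrac12(|W|-|P|)d$, and since $|W|-|P|=n-|U_1|-|U_2|\le-2\delta n$ this is at most $(\tfrac12-\delta)nd$. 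As $G$ has $\tfrac12 nd$ edges, at least $\delta nd$ of them join $U_1$ to $U_2$; at most $|S'|d\le(n^{0.6}+2)d<\delta nd$ of these touch $S'$, so some such edge avoids $S'$ entirely. Fix it as $x_2y_2$, labelled so that $x_2\in U_1$ and $y_2\in U_2$.

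Finally I would close up the path: pick $y_1\in N_F(y_2)\cap A_2$ (nonempty since $y_2\in U_2$) and then $x_1\in(N_F(x_2)\cap A_1)\setminus\{y_1\}$ (nonempty since $x_2\in U_1$ gives $|N_F(x_2)\cap A_1|\ge2$). Then $xx_1,yy_1,x_2y_2\in E(G)$ and $x_1x_2,y_1y_2\in E(F)$ with $x_1,x_2,y_1,y_2\in[n]\setminus S$, as required; moreover the six vertices $x,y,x_1,x_2,y_1,y_2$ are pairwise distinct, since $x_1\ne x_2$ and $y_1\ne y_2$ as $F$ is loopless, $x_2\ne y_2$ as $G$ is loopless, $x_1\ne y_1$ by construction, $x_1\ne y_2$ and $x_2\ne y_1$ since otherwise $x_2y_2$ would lie in $E(F)\cap E(G)=\emptyset$, and none of $x_1,x_2,y_1,y_2$ equals $x$ or $y$ because they avoid $S'$. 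The main obstacle is precisely this middle-edge count: a $d$-regular graph with small $d$ need not contain an edge between two prescribed vertex sets of size $n/2$, so one has to use that $U_1$ and $U_2$ are each strictly larger than $n/2$ --- with combined surplus $|U_1|+|U_2|-n\ge2\delta n$ --- to force $\delta nd$ edges between them.
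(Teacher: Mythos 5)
Your proof is correct. The overall architecture matches the paper's: grow the path $x,x_1,x_2,y_2,y_1,y$ from both ends, use the $F$-edge-density hypothesis to show that the candidate sets for $x_2$ and $y_2$ each have size exceeding $n/2$, and then find a $G$-edge between them. The one place where you genuinely diverge is the crux step of producing that middle $G$-edge. The paper observes that in a $d$-regular graph every set $X$ satisfies $|N_G(X)|\geq |X|$ (by double-counting edge endpoints), so two sets of size greater than $n/2$ must be joined by an edge; you instead run an explicit edge count using the surplus $|U_1|+|U_2|-n\geq 2\delta n$ to show that at least $\delta nd$ edges of $G$ join $U_1$ to $U_2$, far more than the at most $|S'|d$ that can touch $S'$. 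Your version is longer but quantitatively stronger, and it folds the avoidance of $S$ into the same count, whereas the paper builds the exclusion of $S$ into the definitions of $B_x$ and $B_y$ from the start. A second, minor difference: the paper chooses $A_x\subset N_G(x)$ and $A_y\subset N_G(y)$ to be \emph{disjoint} sets of size exactly $\delta^2 n$, which makes $x_1\neq y_1$ automatic, while you keep $A_1$ and $A_2$ possibly overlapping and instead demand two $F$-neighbours of $x_2$ in $A_1$ so that $x_1$ can dodge $y_1$ at the end. Both devices work; the paper's is slightly cleaner, yours makes the distinctness of all six vertices fully explicit, which is what the application (keeping $G'$ regular) actually needs.
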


\begin{proof}
Since $G$ is $d$-regular with $d \geq \eps c n \geq 2\delta^2n + n^{0.6}$ we can choose
disjoint sets  $A_x\subset N_G(x)$ of size $\delta^2n$
and $A_y\subset N_G(y)$ of size $\delta^2n$ which are also disjoint from $S$ .
Further, we denote $B_x = N_F(A_x)\setminus (S\cup A_x\cup A_y)$
and $B_y = N_F(A_y)\setminus (S\cup A_x\cup A_y)$. Since as we already explained above,
the neighborhood in $F$ of any set of size at least $\delta^2 n$ has size at least $(1/2+\delta)n$,
we conclude that $|B_x|, |B_y|> n/2$.
Since $G$ is regular, every subset $X$ of $G$ satisfies $|N_G(X)| \geq |X|$. Indeed
\[d|X|=\sum_{v \in X} d(v) \leq \sum_{u \in N_G(X)} d(u) =d|N_G(X)|.\]
Therefore there exists an edge $x_2y_2$  in $G$ with $x_2\in B_x$ and $y_2\in B_y$.
Choosing a neighbor $x_1 \in A_x$ of $x_2$ and a neighbor $y_1\in A_y$ of $y_2$ finishes the proof.
\end{proof}

We are now ready to complete the proof of Lemma~\ref{lem:2-factors}. Let $s^*=\sqrt{n\log n}$.
To lower bound the number of $(\leq 2)$-factors with at most $s^*$ components in $G$, we use Corollary~2.8 in \cite{FKS12}.
\begin{proposition}[Corollary~2.8 in \cite{FKS12}]
Let $\tilde{\alpha}> 0$ be a constant and let $n$ be a positive integer. Suppose that:
\begin{itemize}
\item
$\tilde{G}$ is a graph on $n$ vertices, and
\item
$\tilde{G}$ is $\tilde{\alpha}n$-regular.
\end{itemize}
Then the number of $(\leq 2)$-factors of $\tilde{G}$ with at most $s^*$ cycles is $(1 + o(1))^n \lt(\frac{\tilde{\alpha}n}{e}\rt)^n$.
\end{proposition}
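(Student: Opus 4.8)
The plan is to prove a matching upper and lower bound, both of the form $(1+o(1))^n(\tilde{\alpha}n/e)^n$.

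\textbf{Upper bound.} This is immediate from Br\'egman's theorem. Every $(\leq 2)$-factor $H$ of $\tilde G$ gives rise to at least one permutation $\sigma$ with $\prod_{i=1}^n (A_{\tilde G})_{i\sigma(i)}=1$ (orient each cyclic component of $H$ arbitrarily), and distinct $(\leq 2)$-factors use disjoint sets of such permutations. Hence the number of $(\leq 2)$-factors of $\tilde G$ -- in particular the number of those with at most $s^*$ cycles -- is at most $per(A_{\tilde G})$, and since $\tilde G$ is $\tilde\alpha n$-regular, Br\'egman's theorem and Stirling's formula give $per(A_{\tilde G})\le \big((\tilde\alpha n)!\big)^{n/(\tilde\alpha n)}=(1+o(1))^n(\tilde\alpha n/e)^n$.

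\textbf{Reduction for the lower bound.} Expanding the permanent, $per(A_{\tilde G})=\sum_H 2^{c(H)}$, where $H$ runs over the $(\leq 2)$-factors of $\tilde G$ and $c(H)$ is the number of cyclic components of $H$ of length at least $3$ (an isolated edge corresponds to a unique transposition, a cycle of length $\geq 3$ can be oriented in two ways). Since $\tfrac1{\tilde\alpha n}A_{\tilde G}$ is doubly stochastic, the van der Waerden bound for permanents (Egorychev--Falikman) gives $per(A_{\tilde G})\ge (\tilde\alpha n)^n n!/n^n=(1+o(1))^n(\tilde\alpha n/e)^n$, so in fact $per(A_{\tilde G})=(1+o(1))^n(\tilde\alpha n/e)^n$; note also $per(A_{\tilde G})>0$ since the bipartite double cover of $\tilde G$ is $\tilde\alpha n$-regular, hence has a perfect matching. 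Now split the sum according to $c(H)$. For $H$ with $c(H)\le s^*$ the weight satisfies $2^{c(H)}\le 2^{s^*}=2^{\sqrt{n\log n}}=(1+o(1))^n$, so the total contribution of such $H$ is at most $(1+o(1))^n$ times the number $N$ of $(\leq 2)$-factors with at most $s^*$ cycles; the remaining part of the sum is exactly the number of permutations $\sigma$ with $\prod_i (A_{\tilde G})_{i\sigma(i)}=1$ having more than $s^*$ cycles. Hence it suffices to show that this last quantity is at most $\tfrac12\,per(A_{\tilde G})$, since then $N\ge (1+o(1))^n(\tilde\alpha n/e)^n$, matching the upper bound.

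\textbf{Permutations with many cycles.} Let $\sigma$ be uniformly random among the $per(A_{\tilde G})$ permutations with $\prod_i (A_{\tilde G})_{i\sigma(i)}=1$. It is enough to prove $\Exp\big[\#\{\text{cycles of }\sigma\}\big]=o(s^*)$, since then Markov's inequality yields $\Pr[\sigma\text{ has more than }s^*\text{ cycles}]=o(1)\le\tfrac12$. Cycles of length greater than $L:=\sqrt n$ number at most $n/L=\sqrt n=o(s^*)$ for \emph{every} $\sigma$, so it remains to bound the expected number of short cycles. For a fixed $k$-element set $V\subseteq[n]$, the event that $V$ is one cycle of $\sigma$ occurs exactly when $\sigma|_V$ is one of the (at most $2\,\#\{\text{Ham.\ cycles of }\tilde G[V]\}$) cyclic permutations of $V$ using edges of $\tilde G$ and $\sigma|_{[n]\setminus V}$ is one of the $per(A_{\tilde G}[[n]\setminus V])$ valid permutations of $\tilde G-V$; summing over $V$ and using $\sum_{|V|=k}\#\{\text{Ham.\ cycles of }\tilde G[V]\}=\#\{k\text{-cycles of }\tilde G\}\le n(\tilde\alpha n)^{k-1}/(2k)$ (as $\tilde G$ is $\tilde\alpha n$-regular) gives
\[
\Exp\big[\#\{\text{cycles of length }\le L\}\big]\ \le\ \sum_{k=2}^{L}\frac{n(\tilde\alpha n)^{k-1}}{k}\cdot\max_{|V|=k}\frac{per(A_{\tilde G}[[n]\setminus V])}{per(A_{\tilde G})}.
\]
The argument is then finished by the \emph{permanent-minor estimate}: there is a constant $C=C(\tilde\alpha)$ with $per(A_{\tilde G}[[n]\setminus V])\le C\,(\tilde\alpha n)^{-|V|}\,per(A_{\tilde G})$ whenever $|V|\le L$. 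Substituting it, the right-hand side above is at most $\sum_{k=2}^{L}\tfrac{C}{k\tilde\alpha}=O_{\tilde\alpha}(\log n)$, and together with the $\le\sqrt n$ long cycles this gives $\Exp[\#\{\text{cycles of }\sigma\}]=O(\sqrt n)=o(\sqrt{n\log n})=o(s^*)$, as required.

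\textbf{The main obstacle.} Everything thus reduces to the permanent-minor estimate, which I expect to be the hard part. It follows by iterating the single-vertex inequality $per(A_{\tilde G})\ge \deg_{\tilde G}(v)\cdot per(A_{\tilde G}[[n]\setminus v])$ over the at most $L=o(\tilde\alpha n)$ vertices of $V$: during the process the relevant degrees drop by at most $|V|$, so the iteration loses only the constant factor $C=(1-L/(\tilde\alpha n))^{-L}=e^{O_{\tilde\alpha}(1)}$, and the crucial point is that the exponent is $(\tilde\alpha n)^{-|V|}$ rather than a power of $e/(\tilde\alpha n)$ (the latter, coming from a crude Br\'egman/van der Waerden comparison, would not suffice). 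To prove the single-vertex inequality, expand $per(A_{\tilde G})$ along the row of $v$: this writes it as a sum of $\deg_{\tilde G}(v)$ terms, one per neighbour $j$ of $v$, each equal to the number of permutations $\sigma$ with $\sigma(v)=j$ and $\prod_i(A_{\tilde G})_{i\sigma(i)}=1$; so it suffices to show that for every neighbour $j$ there are at least $per(A_{\tilde G}[[n]\setminus v])$ such permutations. I would establish this by a splicing/rotation argument -- starting from a valid permutation of $\tilde G-v$, insert $v$ immediately before $j$ into its cycle structure, and when this naive splice is unavailable reroute using the large neighbourhood $N_{\tilde G}(v)$ -- or, alternatively, by invoking permanental inequalities available for regular graphs in the literature. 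The rest is routine bookkeeping with Stirling's formula, using crucially that $s^*=\sqrt{n\log n}$ is polynomially larger than $\sqrt n$ yet sub-exponential in $n$.
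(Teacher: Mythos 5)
The paper does not prove this proposition; it is quoted verbatim from \cite{FKS12} (Corollary~2.8 there), so there is no in-paper proof to compare against. I therefore assess your proposal on its own merits.

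Your overall architecture is reasonable and, I believe, close in spirit to the argument in \cite{FKS12}: upper bound by Br\'egman, lower bound by writing $per(A_{\tilde G})$ as a weighted sum over $(\leq 2)$-factors, and then showing via a first-moment argument that a random compatible permutation has few cycles. Two remarks before the main issue. First, a small bookkeeping slip: you split $per(A_{\tilde G})=\sum_H 2^{c(H)}$ according to $c(H)$, the number of cyclic components of length $\geq 3$, but the statement counts \emph{all} components (isolated edges/$2$-cycles included). You need to split by the total number of components; this is harmless since the weight $2^{c(H)}$ is still at most $2^{\text{(total components)}}$ and your expectation computation already counts $2$-cycles. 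Second, your claim that the ``remaining part of the sum is exactly the number of permutations with more than $s^*$ cycles'' is only correct once the split is done by total components rather than by $c(H)$.

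The genuine gap is the permanent-minor estimate $per(A_{\tilde G}[[n]\setminus V])\leq C(\tilde\alpha n)^{-|V|}per(A_{\tilde G})$, which you correctly identify as the crux but do not prove. Your proposed reduction to the single-vertex inequality $per(A_{\tilde G})\geq \deg(v)\cdot per(A_{\tilde G}[[n]\setminus v])$ is itself nontrivial, and the ``splicing'' argument you sketch does not establish it. Inserting $v$ with $\sigma(v)=j$ into a valid permutation $\pi$ of $[n]\setminus v$ requires $\pi^{-1}(j)$ to be a neighbour of $v$, and for a typical $\pi$ and a typical neighbour $j$ this holds with probability roughly $\deg(v)/n=\tilde\alpha$, not $1$. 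So the naive splice produces only about $\tilde\alpha\,\deg(v)$, not $\deg(v)$, valid insertions per $\pi$, i.e.\ it proves at best $per(A_{\tilde G})\geq c\,\tilde\alpha\,\deg(v)\cdot per(A_{\tilde G}[[n]\setminus v])$. A constant-factor loss per deleted vertex is fatal here: after iterating over $|V|=k$ vertices with $k$ up to $L=\sqrt n$, you would get $per(A_V)/per(A)\leq (q/d)^k$ with $q>1$ fixed, and your sum $\sum_{k\leq L}\tfrac{n d^{k-1}}{k}\cdot (q/d)^k$ then blows up like $q^{\sqrt n}$, so you cannot conclude $\Exp[\#\text{cycles}]=o(s^*)$. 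Your remark that one should ``reroute using the large neighbourhood'' is exactly where the real work lies, and rotations away from $v$ would destroy injectivity of the map $\pi\mapsto\sigma$, so it is not clear how to carry this out. The alternative of ``invoking permanental inequalities available for regular graphs in the literature'' is a hand-wave; I am not aware of an off-the-shelf inequality that delivers $per(A^{vj})\geq (1-o(1))\,per(A_v)$ for all neighbours $j$ of $v$ in an arbitrary dense regular graph, and establishing it is precisely the content that a complete proof of the proposition must supply. Until that estimate (or a substitute route around it) is nailed down, the lower bound in your argument is not established.
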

Therefore, the number of $(\leq 2)$-factors with at most $s^*$ components in $G$ is at least  $(1+o(1))^n(d/e)^n$.
Fix one such factor and call it $H$. We will transform $H$ into a Hamilton cycle, using edges from
$G$ and $F$. During this process, for the sake of clarity and convenience we will still use $H$ to denote all the new factors and partial HC which we obtain from it.

As long as $H$ is not a Hamilton cycle, we will modify it using  Claim~\ref{cla:rotator} if it is a partial HC, and Claim~\ref{cla:extension}
if $H$ is a $2$-factor. Note that, after at most every two such steps, the number of components in $H$ goes down,
and therefore after at most $2s^*+1$ iterations we end up with a Hamilton cycle $H$.
Since the Hamming distance between the original $(\leq 2)$-factor and the final Hamilton cycle is $O(s^*)$,
the number of possible Hamilton cycles $H$ which we obtain in this process is at least
\[\frac{(1+o(1))^n(d/e)^n}{n^{O(s^*)}}=(1+o(1))^n(d/e)^n.\]

It remains to construct the edge sets $E_F $ and $E_G$, whose role is to keep the resulting graph regular.
We denote ${\cal E}_F = H \cap F$, and note that $|{\cal E}_F| = O(s^*)$.
One by one, for every pair of vertices $x,y\in [n]$ with $xy\in {\cal E}_F$, we apply Observation~\ref{obs:substitution}
to find vertices $x_1, x_2, y_1, y_2$ such that $xx_1,yy_1, x_2y_2\in E(G)$ and $x_1x_2, y_1y_2 \in E(F)$.
We choose it so that none of the vertices $x_1, x_2, y_1, y_2$ is incident to an edge from ${\cal E}_F$ or a previously chosen edge from $E_F$.
We then add the edges $x_1x_2, y_1y_2$ to $E_F$ and the edges $xx_1,yy_1, x_2y_2$ to $E_G$.
Since at most $O(s^*)$ edges from ${\cal E}_F$ and $E_F$ were chosen before, we can always put all endpoints of such edges
into the set $S$ for the further applications of Observation~\ref{obs:substitution}.

We claim that the sets $E_F$ and $E_G$ together with the Hamilton cycle $H$ satisfy the requirements of the lemma.
Indeed, in constructing $F'=F\setminus (H \cup E_F)$ we delete at most $O(s^*)=o(n^{0.6})$ edges from $F$,
and so the number of edges in $F$ between two sets $A$ and $B$ of sizes $|A|\geq \delta^2 n$
and $|B|\geq(1/2-\delta)n$ remains at least $d n^{0.6} - O(s^*)>(d-2)n^{0.6}$.
In addition, note that $G'=(G\cup E_F)\setminus (H\cup E_G)$ is $(d-2)$-regular.
In the transition from $G$ to $G'$, every vertex that isn't incident to an edge of ${\cal E}_F$ loses the two edges of $H$ that pass through it.
On the other hand, for every vertex incident to an edge of ${\cal E}_F$ we remove one edge from $G\cap H$ incident to it,
and then delete an edge from $E_G$ incident to it.
The edges from $E_F$ ensure that this doesn't change the degree of other vertices.
\end{proof}

\vspace{0.15cm}
\noindent
{\it Proof of Theorem \ref{counting}.}\,
Fix an edge partition of the graph $\Gamma$ into graphs $G$, $F$, and $R$ which satisfy the assertions of Lemma~\ref{lem:main}.
Starting with the $d_0$-regular graph $G_0=G$ with $d_0\geq (1-2\eps)r$ and the graph $F_0=F$,
we apply Lemma~\ref{lem:2-factors} repeatedly to remove Hamilton cycles from $G_i \cup F_i$.
After the $i$'th iteration, we will have a $d_i$-regular graph $G_i$ with $d_i=d_0 - 2i$
and a graph $F_i$ satisfying the second bullet of Lemma~\ref{lem:2-factors}. Therefore the process can be continued.
We stop after $t=\frac{d_0-\eps r}{2}$ steps, at which point $G_t$ is an $\eps r$-regular graph.
The bound from Lemma~\ref{lem:2-factors} guarantees that there are at least
$\left((1+o(1))\frac{d_i}{e}\right)^{n}$ choices for a Hamilton cycle in the $i$'th step.
Therefore, this procedure results in at least
\[
\prod_{i=0}^{t-1} \left((1+o(1))\frac{d_i}{e}\right)^{n} =
\left( (1+o(1))^{t} e^{-t} \frac{2^{d_0/2} (d_0/2)!}{2^{\eps r/2} (\eps r/2)!} \right)^{n} \geq
\left( e^{-r/2} \, \frac{(\frac{(1-2\eps)r}{2})!}{(\frac{\eps r}{2})!} \right)^{n} \geq r^{(1-4 \eps)\frac{rn}{2}}
\]
possible ordered tuples of $t$ edge-disjoint Hamilton cycles.

Let us fix one such collection of $t$-edge disjoint Hamilton cycles and
let $R'$ denote the union of $R$ and the edges of $G\cup F$ that do not belong to any of these cycles.
The graph $R'$ is $\alpha n$-regular ($\alpha=3\eps c$), since it is obtained from $\Gamma$ by deleting $t$ edge-disjoint Hamilton cycles.
The graph $R'$ is also a robust $(\nu, \tau)$-expander, since as we mentioned before,
this property is monotone and $R$ satisfies it.
Since $n\geq n_0$, it follows from Theorem~\ref{thm:suff_conditions} that $R'$ admits (at least one) Hamiltonian decomposition.
Together with the above Hamilton cycles it gives us the desired Hamiltonian decomposition of $\Gamma$.

Clearly the same decompositions of $\Gamma$ can appear in our process in at most $(\frac{n-1}{2})!<n^n$ different orderings.
Therefore, we have that $H(\Gamma)\geq r^{(1-4 \eps)\frac{rn}{2}}/n^n \geq r^{(1-5\eps)\frac{rn}{2}}$. \hfill$\Box$

\section{Concluding remarks and open questions}
\label{sec:concl}
\begin{itemize}
\item
A substantial gap remains between the upper bound of Proposition~\ref{thm:upper_bound} and the main result that we prove. We tend to believe that the upper bound of the proposition is close to the truth, i.e, $H(\Gamma)=\left((1+o(1))\frac{r}{e^2}\right)^{rn/2}$
for an $n$-vertex $r$-regular graph $\Gamma$ with $r=cn, c>1/2$. To prove such an estimate using our methods, one needs a Hamiltonian decomposition result for a sparse analogue of robust expanders.
\item
When the degree $r$ of the graph $\Gamma$ is odd, the result of K\"{u}hn and Osthus also implies that the edges of $\Gamma$ can be decomposed into $(r-1)/2$ Hamilton cycles and a perfect matching. Our proof can be easily adapted to show that the number of such decompositions is $r^{(1+o(1))rn/2}$.
\item
It would be interesting to estimate the number of Hamiltonian decompositions of other families of graphs.
In particular, Theorem 1.1.3. in~\cite{CLKOT15+} states that for even $r\geq \lt\lfloor\frac{n}{2}\rt\rfloor$,
every $r$-regular graph on $n$ vertices has a Hamiltonian decomposition.
Can one strengthen our theorem by giving an accurate estimate of the number of such decompositions for all $r$, starting with $\lt\lfloor\frac{n}{2}\rt\rfloor$?
\item
Finally, our main result can be further generalized to subgraphs of quasirandom graphs. A graph $G$ on $n$ vertices  is called
$(\alpha,\beta)$-regular if its minimum degree is at least $\alpha n-1$ and
\[\left|\frac{e_G(S,T)}{|S||T|}-\alpha\right| \leq \beta \]
for every pair of disjoint sets $S, T$ of size at least $\beta n$. It is easy to see that a complete graph on $n$ vertices
is $(1,\beta)$-regular for any $\beta>0$. Another example of such a graph is an $\alpha n$-regular graph on $n$ vertices whose nontrivial eigenvalues in absolute value are all $o(n)$ see, e.g., the survey \cite{KS} for more details.
We can obtain the following result on the number of Hamiltonian decompositions of regular subgraphs of $(\alpha,\beta)$-regular graphs.
\begin{theorem}
\label{quasirandom}
For every constants $ c, \alpha$ such that $c>\alpha/2$ there exists $\beta>0$ such that the following holds. Let $\Gamma$ be an $r$-regular subgraph of an $(\alpha,\beta)$-regular graph $G$ such that $r=cn$ is even. Then the number of Hamiltonian decompositions of $\Gamma$ satisfies $H(\Gamma)=r^{(1+o(1))rn/2}$.
\end{theorem}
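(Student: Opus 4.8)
\noindent\emph{Proof proposal.}
The upper bound $H(\Gamma)\le r^{(1+o(1))rn/2}$ is immediate from Proposition~\ref{thm:upper_bound} (note $((1+o(1))r/e^2)^{rn/2}=r^{(1+o(1))rn/2}$ once $r=cn\to\infty$), and uses only the regularity of $\Gamma$. For the lower bound the plan is to show that $\Gamma$ meets the hypotheses of Theorem~\ref{counting}, so that its entire proof applies verbatim and yields $H(\Gamma)\ge r^{(1-5\eps)rn/2}$ for every small $\eps>0$; letting $\eps\to0$ then gives the claim, exactly as Theorem~\ref{thm:counting_walecki} was deduced from Theorem~\ref{counting}. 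This should work because that proof --- Lemma~\ref{lem:main} and its ingredient Lemma~\ref{lem:regular_subgraph}, Lemma~\ref{lem:2-factors} together with Claims~\ref{cla:extension} and~\ref{cla:rotator} and Observation~\ref{obs:substitution}, and the concluding appeal to Theorem~\ref{thm:suff_conditions} --- never uses anything about $\Gamma$ beyond regularity and the single edge-count condition of Theorem~\ref{counting}, and nothing at all about a host graph. The one place the assumption $c>1/2$ entered in this paper was in deducing that edge-count condition from the minimum degree; I would instead deduce it from the quasirandomness of $G$ and the weaker bound $c>\alpha/2$.

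Concretely, I would first fix a small $\eps>0$. Inspecting the proof of Theorem~\ref{counting}, its conclusion holds with the constant $\delta$ replaced by any value in $(0,\min(\eps c/5,\tau/2)]$, where $\tau$ is the constant appearing there; fix such a $\delta$ that is moreover smaller than $\tfrac{c-\alpha/2}{4\alpha}$ (possible once $\eps$ is small, since $\eps c/5\to0$). Then choose $\beta>0$ so small that $(\alpha+\beta)(\tfrac12+\delta)\le c-\tfrac{c-\alpha/2}{2}$ and $\beta\le\tfrac{\delta^2(c-\alpha/2)}{32}$, say. It then remains to prove: for every $r$-regular subgraph $\Gamma$ of an $(\alpha,\beta)$-regular graph $G$ with $r=cn$ even, and all $A,B\subseteq V(\Gamma)$ with $|A|\ge\delta^2 n$ and $|B|\ge(1/2-\delta)n$, one has $e_\Gamma(A,B)\ge\gamma n^2$, where $\gamma:=\tfrac{\delta^2(c-\alpha/2)}{8}$; this is exactly the edge-count hypothesis of Theorem~\ref{counting}, with $\gamma>0$ a constant.

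To prove this, set $\bar B:=V(\Gamma)\setminus B$, so $|\bar B|\le(\tfrac12+\delta)n$. Since $N_\Gamma(v)\subseteq N_G(v)$ for every $v$,
\[
\sum_{v\in A}|N_\Gamma(v)\cap B|=cn|A|-\sum_{v\in A}|N_\Gamma(v)\cap\bar B|\ge cn|A|-\sum_{v\in A}|N_G(v)\cap\bar B|.
\]
The last sum counts ordered $G$-edges from $A$ into $\bar B$, and splitting it according to whether the endpoint in $A$ lies in $B$ it equals $e_G(A\cap B,\bar B)+2e_G(A\cap\bar B,A\cap\bar B)+e_G(A\cap\bar B,\bar B\setminus A)$. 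To the two bipartite terms (pairs of disjoint sets) I apply the quasirandomness bound $e_G(S,T)\le(\alpha+\beta)|S||T|$, valid for disjoint $S,T$ with $|S|,|T|\ge\beta n$, together with the trivial bound $e_G(S,T)\le\beta n^2$ when a side is smaller than $\beta n$; to $e_G(A\cap\bar B,A\cap\bar B)$ I apply the standard consequence $2e_G(S,S)\le(\alpha+\beta)|S|^2+O(\beta n^2)+o(n^2)$, obtained from a uniformly random equipartition $S=S_1\cup S_2$ (in expectation $e_G(S_1,S_2)\approx\tfrac12 e_G(S,S)$, while $e_G(S_1,S_2)\le(\alpha+\beta)|S_1||S_2|$ deterministically). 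Since $|A\cap B||\bar B|+|A\cap\bar B||\bar B\setminus A|+|A\cap\bar B|^2=|A||\bar B|$, these add up to
\[
\sum_{v\in A}|N_G(v)\cap\bar B|\le(\alpha+\beta)|A||\bar B|+O(\beta n^2)+o(n^2)\le(\alpha+\beta)(\tfrac12+\delta)|A|n+O(\beta n^2)+o(n^2).
\]
Hence $\sum_{v\in A}|N_\Gamma(v)\cap B|\ge(c-(\alpha+\beta)(\tfrac12+\delta))|A|n-O(\beta n^2)-o(n^2)\ge\tfrac{c-\alpha/2}{2}\delta^2n^2-O(\beta n^2)-o(n^2)\ge\tfrac{c-\alpha/2}{4}\delta^2n^2$ for $n$ large, by the choices of $\delta,\beta$ and $|A|\ge\delta^2n$. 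Finally $\sum_{v\in A}|N_\Gamma(v)\cap B|$ equals $e_\Gamma(A,B)$ plus the number of edges of $\Gamma$ inside $A\cap B$, hence is at most $2e_\Gamma(A,B)$, which gives $e_\Gamma(A,B)\ge\tfrac{c-\alpha/2}{8}\delta^2n^2=\gamma n^2$.

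With this established, the proof of Theorem~\ref{counting} applies to $\Gamma$ and yields $H(\Gamma)\ge r^{(1-5\eps)rn/2}$; combined with the upper bound, and letting $\eps\to0$ (which forces $\beta\to0$, so the asymptotic statement is to be read, as is customary, over graphs drawn from increasingly pseudorandom hosts), we obtain $H(\Gamma)=r^{(1+o(1))rn/2}$. I expect the main --- and essentially only new --- obstacle to be precisely the per-vertex issue: $(\alpha,\beta)$-regularity gives no control on $|N_G(v)\cap\bar B|$ for a single vertex $v$ (for $G=K_n$ one genuinely needs $\alpha=1$, i.e.\ $c>1/2$), forcing one to average over $v\in A$ and, as a result, to bound the number of $G$-edges lying inside the single set $\bar B$; this (standard but necessary) step is where the quasirandomness of $G$, rather than mere regularity of $\Gamma$, is used, and one must also be mildly careful that the $O(\beta n^2)$ and $o(n^2)$ error terms stay dominated by $\gamma n^2$.
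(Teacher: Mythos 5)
Your proposal follows the paper's own route exactly: verify the edge-count hypothesis of Theorem~\ref{counting} for sets of sizes $\delta^2 n$ and $(1/2-\delta)n$ by comparing the $cn|A|$ edges of $\Gamma$ leaving $A$ with the at most roughly $\alpha(1/2+\delta)n|A|$ edges of the host graph $G$ from $A$ into $V\setminus B$ (using $c>\alpha/2$ and $\delta,\beta$ small), and then invoke Theorem~\ref{counting} as in the deduction of Theorem~\ref{thm:counting_walecki}. The paper phrases this comparison as a one-line contradiction and explicitly ``omits further calculation''; your write-up simply supplies those omitted details (the treatment of edges inside $A\cap\bar B$ and the $O(\beta n^2)$ error bookkeeping), so it is essentially the same proof.
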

When $G$ is a complete graph $K_n$ this implies Theorem \ref{thm:counting_walecki}.

To prove this theorem, using our Theorem \ref{counting}, one needs only to show that
there are at least $\gamma n^2$ edges (for some very small but fixed $\gamma$) between any two subsets $A, |A|=\delta^2 n$ and $B, |B|= (1/2-\delta)n$ of $V(\Gamma)$. Choose $\delta \ll (c-\alpha/2)$ and $\beta\ll \delta^2$. If $\Gamma$ contains two subsets $A, B$ violating the above condition, then by the regularity of $\Gamma$ the number of its edges between $A$ and $V(\Gamma)-B$ is very close to $cn|A|$. On the other hand the graph $G$, containing $\Gamma$, has roughly $\alpha(n-|B|)|A|=
\alpha(1/2+\delta)|A|$ edges between these sets (which is less), a contradiction. We omit further calculation.

\end{itemize}

\vspace{0.15cm}
\noindent
{\bf Acknowledgment.}\,
We would like to thank an anonymous referee for the thorough review of the
paper and for pointing an error in the proof of Lemma \ref*{lem:main}.
Part of this work was done when the third author visited the Freie University Berlin.
He would like to thank the Humboldt Foundation for its generous support during this visit and the
Freie University for its hospitality and stimulating research environment.

\section*{Appendix} \label{app:regular_subgraph}

\begin{proof}[Proof of Lemma \ref{lem:regular_subgraph}]
The proof uses the same ideas as in \cite{KuOs13}, Lemma 5.2 and also \cite{FK}, Section 3.1.

Let $D^*$ be a random orientation of $G^*$, where each edge chooses an orientation uniformly at random and independently of the other edges. We claim that with high probability, $D^*$ satisfies the following.
\begin{itemize}
\item For every vertex $v$ we have the following concentration on the indegrees and outdegrees:
\begin{align*}
c_0 n/2 - n^{2/3} \leq \deg_{in}(v) \leq c_0 n/2 + n^{2/3}, \\
c_0 n/2 - n^{2/3} \leq \deg_{out}(v) \leq c_0 n/2 + n^{2/3}
\end{align*}
\item For every two subsets $A,B \subset V$ such that $|A|\geq c_0 n/3$ and $|B|\geq n/2$ there are at least $\gamma_0 n^2/3$ edges from $A$ to $B$, and at least $\gamma_0 n^2/3$ edges from $B$ to $A$.
\end{itemize}
This follows from a straightforward application of Chernoff's inequality and the union bound.

Recall that $d = \lt\lceil\left(\frac{c_0-\eps_0}{2}\right) n\rt\rceil$.
We construct a subdigraph of $D^*$ whose indegrees and outdegrees are all equal to $d$.
By forgetting the orientation of this subgraph we obtain the desired subgraph of $G^*$.

Consider the following flow network $H$.
The vertex set is $\{s,t\}\cup X \cup Y$,
where $X$ and $Y$ are both copies of $V(G^*)$. We add an edge $(x,y)$ of capacity $1$ for every edge $(x,y)$ in $D^*$.
Furthermore, we connect $s$ to all vertices in $X$ and also connect all vertices in $Y$ to $t$ using edges of capacity $d$.
By the above discussion and the definition of $d$, note that all vertices in $X$ have outdegree
and indegree at least $c_0 n/2 - n^{2/3} >d$ and at most $c_0 n/2 + n^{2/3} $.
Our aim is to show that there is a flow of value $dn$ in $H$.
Clearly, the edges used in such a flow will correspond to a subdigraph of $D^*$ whose indegrees and outdegrees are all $d$, as desired.

By the Max-Flow-Min-Cut Theorem, it is sufficient to show that the capacity of every cut is at least $dn$.
Let $C \subset V(H)$ such that $s \in C$ and $t \notin C$,
and let $S = C \cap X$ and $T = C \cap Y$.
The capacity of the cut defined by $C$ is $d(n-|S|)+e(S,Y \setminus T) + d|T|$,
which is at least $dn$ if and only if $e(S,Y\setminus T) \geq d|S| - d|T|$.
To show that this holds for any $S$ and $T$, we must consider several cases.
Note that we can assume that $|S|>|T|$, since otherwise $d|S|-d|T|\leq 0$ and we are done.

\begin{enumerate}
\item $|S|<d$: In this case, we have
\begin{align*}
e(S,Y \setminus T) = e(S,Y) - e(S,T) \geq (c_0 n/2 - n^{2/3})|S| - |S||T| \geq d |S| - d|T|.
\end{align*}
\item
$|S|-|T|\geq \frac{4 n^{2/3}}{\eps_0}$:
Here, we use the fact that $d$ is substantially smaller than the indegrees and outdegrees of the vertices.
\begin{align*}
 e(S,Y \setminus T) &\geq e(S,Y)-e(X,T) \geq (c_0 n/2 - n^{2/3})|S| - (c_0 n/2 + n^{2/3})|T| \\
& = d|S|-d|T| + \left(\frac{\eps_0 n}{2}\right)(|S|-|T|) - 2n^{2/3}|T| \geq d|S|-d|T|.
\end{align*}
\item $|S| \leq (1-c_0/3) n$: If $|T| \geq n/2$, since $|X \setminus S| \geq c_0 n/3$, by the second property of
$D^*$ we have $e(X\setminus S,T)\geq \frac{\gamma_0 n^2}{3}$.
Otherwise, $|T|<n/2$.
Since after the above cases, we can assume that $|T|\geq |S| - \frac{4 n^{2/3}}{\eps_0}$
and $|S| \geq d$,
we have that $|X \setminus S|\geq (1-o(1))n/2$ and $T \geq c_0 n/3$. Thus, again by the second property of
$D^*$, we have $e(X\setminus S,T)\geq (1-o(1))\frac{\gamma_0 n^2}{3}$.
Indeed, by adding $o(n)$ vertices to $X \setminus S$ we obtain a set of size at least $n/2$,
and this can change the number of edges to $T$ by at most $o(n^2)$.
Hence, in both cases we have $e(X\setminus S,T)\geq (1-o(1))\frac{\gamma_0 n^2}{3}$, and therefore
\begin{align*}
e(S,Y \setminus T) & = e(S,Y) - e(X,T) + e(X \setminus S,T) \\
&  \geq (c_0 n/2 - n^{2/3})|S| - (c_0 n/2 + n^{2/3})|T| + (1-o(1))\frac{\gamma_0 n^2}{3} \geq d|S|-d|T|.
\end{align*}
\item $ S>(1-c_0/3)n$: Here, by the above we assume that $|T| \geq |S|-\frac{4 n^{2/3}}{\eps_0}$.
\begin{align*}
 e(S,Y \setminus T) & = e(X,Y \setminus T) - e(X\setminus S,Y \setminus T) \geq d|Y \setminus T| - |X \setminus S||Y \setminus T| \\
&\geq  d |Y \setminus T| - |X \setminus S|\left(\frac{c_0 n}{3} + \frac{4 n^{2/3}}{\eps_0}\right) \geq d|Y \setminus T|-d|X \setminus S| = d|S|-d|T|.
\end{align*}
\end{enumerate}

\end{proof}

\end{document}